\documentclass[leqno]{article}
\usepackage{geometry}
\usepackage{graphicx}	
\usepackage[cp1251]{inputenc}
\usepackage[english]{babel}
\usepackage{mathtools}
\usepackage{amsfonts,amssymb,mathrsfs,amscd,amsmath,amsthm}

\ifpdf
\usepackage{hyperref}
\else
\usepackage[dvipdfmx]{hyperref}
\fi

\usepackage{verbatim}

\usepackage{url}

\usepackage{stmaryrd}

\usepackage{cmap}

\providecommand\phantomsection{}

\def\thtext#1{
  \catcode`@=11
  \gdef\@thmcountersep{. #1}
  \catcode`@=12
}

\def\threst{
  \catcode`@=11
  \gdef\@thmcountersep{.}
  \catcode`@=12
}

\newcounter{nxt}

\theoremstyle{plain}
\newtheorem{thm}{Theorem}[section]
\newtheorem{prop}[thm]{Proposition}

\newtheorem{cor}[thm]{Corollary}
\newtheorem{lem}[thm]{Lemma}

\theoremstyle{definition}

\newtheorem{dfn}[thm]{Definition}
\newtheorem{examp}[thm]{Example}

\newtheorem{rk}[thm]{Remark}

 \catcode`@=11
 \def\.{.\spacefactor\@m}
 \catcode`@=12

\newcommand{\e}{\varepsilon}
\newcommand{\g}{\gamma}
\renewcommand{\l}{\lambda}
\newcommand{\D}{\Delta}

\newcommand{\s}{\sigma}

\renewcommand{\t}{\tau}

\newcommand{\R}{\mathbb{R}}

\newcommand{\0}{\emptyset}
\renewcommand{\:}{\colon}
\newcommand{\x}{\times}
\renewcommand{\c}{\circ}
\newcommand{\tom}{\rightrightarrows}
\newcommand{\sm}{\setminus}
\renewcommand{\ss}{\subset}
\renewcommand{\sp}{\supset}

\newcommand{\oper}[1]{\operatorname{#1}}
\newcommand{\diam}{\oper{diam}}
\newcommand{\dis}{\oper{dis}}
\newcommand{\id}{{\oper{id}}}
\newcommand{\GH}{\oper{\mathcal{G\!H}}}

\newcommand{\cB}{\mathcal{B}}
\newcommand{\cM}{\mathcal{M}}
\newcommand{\cP}{\mathcal{P}}
\newcommand{\cR}{\mathcal{R}}

\newcommand{\bR}{{\bar R}}
\newcommand{\bX}{\bar{X}}
\newcommand{\bY}{\bar{Y}}
\newcommand{\bcR}{\overline{\cR}}

\newcommand{\rom}[1]{{\em #1}}

\renewcommand{\)}{\rom)}

\let\oldem\emph
\renewcommand{\emph}[1]{\oldem{\textbf{#1}}}
\let\oldbibitem\bibitem
\renewcommand{\bibitem}[1]{\oldbibitem{#1}\renewcommand{\emph}[1]{\oldem{##1}}}

\begin{document}
 \title{Classical and continuous Gromov--Hausdorff distances}
 \author{K.~V.~Semenov, A.~A.~Tuzhilin}
 \maketitle

\begin{abstract}
Starting from the definition of the Gromov--Hausdorff distance via distortion of correspon\-den\-ces, we add the requirement of semicontinuity of each correspondence and its inverse. It turns out that in the case of lower semicontinuity we obtain the same classical Gromov--Hausdorff distance, while for upper semicontinuity we are able to prove coincidence with the classical one only in cases where the spaces are either totally bounded or boundedly compact.

\textbf{Keywords\/}: metric spaces, Hausdorff distance, Gromov--Hausdorff distance, set-valued maps, lower semicontinuous and upper semicontinuous set-valued maps.
\end{abstract}

\section*{Introduction}
\markboth{Classical and continuous Gromov--Hausdorff distances}{Introduction}
The famous Gromov--Hausdorff distance~\cite{Gromov1981,Gromov1999} measures the degree of non-isometry of metric spaces: for isometric spaces the distance is zero, and the more ``dissimilar'' the spaces are, the larger this distance is. In the classical definition of the Gromov--Hausdorff distance, additional structures that metric spaces may be equipped with are not taken into account. Even the topology induced by the metric is ignored by this distance. In~\cite{LimMemoliSmith}, a modification of the Gromov--Hausdorff distance that takes continuity into account was proposed. It was noted that when comparing spheres in Euclidean space equipped with the standard intrinsic metric using the classical Gromov--Hausdorff distance, the result differs from comparison using the continuous analogue of this distance. In~\cite{LeeMorales}, another version of the continuous Gromov--Hausdorff distance was proposed for comparing dynamical systems and solutions of partial differential equations. However, the approach of these authors leads to a significant complication of the technique, since their version of the distance does not satisfy the triangle inequality.

We consider several modifications of the classical Gromov--Hausdorff distance that take continuity into account in one way or another. Namely, in the definition of the Gromov--Hausdorff distance via distortion of correspondences, we will additionally require that these correspondences and their inverses, which are essentially surjective set-valued maps, be either lower semicontinuous or upper semicontinuous. We will show that under additional assumptions the result is the same Gromov--Hausdorff distance. Thus, we narrow the family of correspondences, which, we hope, may in some cases simplify the computation of the classical Gromov--Hausdorff distance between specific classes of metric spaces.

\section{Basic definitions and preliminary results}
\markboth{Classical and continuous Gromov--Hausdorff distances}{\thesection.~Basic definitions and preliminary results}
We provide the definitions and results necessary for what follows; details can be found in~\cite{BurBurIva}, see also~\cite{TuzLect}. For convenience, the distance between points $x$ and $y$ of a metric space $X$ will be denoted by $|xy|$.

Let $X$ be an arbitrary metric space, $x\in X$, and $r>0$ and $s\ge0$ be real numbers. Denote by $U_r(x)=\bigl\{y\in X:|xy|<r\bigr\}$ and $B_s(x)=\bigl\{y\in X:|xy|\le s\bigr\}$ the \emph{open\/} and \emph{closed balls\/} centered at $x$ with radii $r$ and $s$, respectively. If $A$ and $B$ are nonempty subsets of $X$, then set $|xA|=|Ax|=\inf\bigl\{|xa|:a\in A\bigr\}$ and $|AB|=|BA|=\inf\bigl\{|ab|:a\in A,\,b\in B\bigr\}$. Next, define the \emph{open $r$-neighborhood\/} and \emph{closed $s$-neighborhood of a set $A$} by setting respectively
$$
U_r(A)=\bigl\{x\in X:|xA|<r\bigr\}\ \ \text{and}\ \ B_s(A)=\bigl\{x\in X:|xA|\le s\bigr\}.
$$

\begin{dfn}\label{dfn:H}
For nonempty subsets $A$ and $B$ of a metric space $X$, the \emph{Hausdorff distance from $A$ to $B$} is the quantity
\begin{multline*}
d_H(A,B)=\max\bigl\{\sup_{a\in A}|aB|,\,\sup_{b\in B}|Ab|\bigr\}=\\ =
\inf\bigl\{r>0:A\ss U_r(B)\ \text{and}\ U_r(A)\sp B\bigr\}=\\ =
\inf\bigl\{s\ge0:A\ss B_s(B)\ \text{and}\ B_s(A)\sp B\bigr\}.
\end{multline*}
\end{dfn}

If $X$ and $Y$ are isometric metric spaces, we denote this fact by $X\approx Y$.

\begin{dfn}\label{dfn:GH}
The \emph{Gromov--Hausdorff distance\/} between nonempty metric spaces $X$ and $Y$ is the quantity
$$
d_{GH}(X,Y)=\inf\bigl\{d_H(X',Y'):X',Y'\ss Z,\,X'\approx X,\,Y'\approx Y\bigr\},
$$
where the infimum is taken over all metric spaces $Z$ and all isometric embeddings of $X$ and $Y$ into $Z$.
\end{dfn}

Definition~\ref{dfn:GH} is ill-suited for concrete computations. There is an equivalent definition given in terms of correspondences. We provide the necessary concepts and results.

For an arbitrary set $Z$, denote by $\cP_0(Z)$ the family of all nonempty subsets of $Z$. In particular, $\cP_0(X\x Y)$ is the set of all nonempty relations between $X$ and $Y$.

\begin{dfn}
For any nonempty metric spaces $X$, $Y$ and a (nonempty) relation $\s\in\cP_0(X\x Y)$, its \emph{distortion $\dis\s$} is the quantity
$$
\dis\s=\sup\Bigl\{\bigl||xx'|-|yy'|\bigr|:(x,y),\,(x',y')\in\s\Bigr\}.
$$
\end{dfn}

The following results follow immediately from the definition of distortion.
\begin{prop}\label{prop:DisSubset}
If $\s,\t\in\cP_0(X\x Y)$ and $\t\ss\s$, then $\dis\t\le\dis\s$.
\end{prop}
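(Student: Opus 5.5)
The plan is to compare the two suprema in the definition of distortion directly, as sets of real numbers. I will introduce the sets
$$
S_\t=\Bigl\{\bigl||xx'|-|yy'|\bigr|:(x,y),\,(x',y')\in\t\Bigr\},\qquad S_\s=\Bigl\{\bigl||xx'|-|yy'|\bigr|:(x,y),\,(x',y')\in\s\Bigr\},
$$
so that by definition $\dis\t=\sup S_\t$ and $\dis\s=\sup S_\s$. Both sets are nonempty, because $\t$ (and hence $\s\sp\t$) is a nonempty relation, and both consist of nonnegative reals. The suprema are taken in $[0,\infty]$, so the value $\infty$ is allowed when the spaces are unbounded.

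The first step is the inclusion $S_\t\ss S_\s$. Take any element of $S_\t$. It equals $\bigl||xx'|-|yy'|\bigr|$ for some pairs $(x,y),(x',y')\in\t$. Since $\t\ss\s$, both pairs also lie in $\s$, so the same number belongs to $S_\s$.

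The second step uses the monotonicity of the supremum with respect to inclusion of nonempty subsets of $[0,\infty]$. Every element of $S_\t$ lies in $S_\s$ and is therefore at most $\sup S_\s$. Hence $\sup S_\s$ is an upper bound for $S_\t$, which gives $\sup S_\t\le\sup S_\s$, that is, $\dis\t\le\dis\s$. If $\dis\s=\infty$ the inequality holds trivially.

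I do not expect a genuine obstacle here. The only point needing care is to allow infinite distortion, which the convention of working in $[0,\infty]$ handles.
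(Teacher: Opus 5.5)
Your proposal is correct and matches the paper, which gives no separate proof and simply notes that the statement follows immediately from the definition of distortion. Your argument, that the set of values for $\t$ is contained in the set of values for $\s$ and so its supremum in $[0,\infty]$ is no larger, is exactly that immediate verification.
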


\begin{prop}\label{prop:DisCompose}
If $\s\in\cP_0(X\x Y)$, $\t\in\cP_0(Y\x Z)$ and $\t\c\s\ne\0$, then $\dis(\t\c\s)\le\dis\s+\dis\t$.
\end{prop}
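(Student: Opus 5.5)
The plan is to bound each term in the supremum defining $\dis(\t\c\s)$ by inserting an intermediate point of $Y$. First I will recall that $\t\c\s=\bigl\{(x,z)\in X\x Z:\ \exists\,y\in Y \text{ with } (x,y)\in\s,\ (y,z)\in\t\bigr\}$. The hypothesis $\t\c\s\ne\0$ guarantees that $\dis(\t\c\s)$ is defined.

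Fix arbitrary pairs $(x,z),(x',z')\in\t\c\s$. By definition of the composition there exist $y,y'\in Y$ such that $(x,y),(x',y')\in\s$ and $(y,z),(y',z')\in\t$. The triangle inequality for real numbers then gives
$$
\bigl||xx'|-|zz'|\bigr|\le\bigl||xx'|-|yy'|\bigr|+\bigl||yy'|-|zz'|\bigr|.
$$
The first summand is one of the quantities in the supremum defining $\dis\s$, so it is at most $\dis\s$. The second is one of the quantities in the supremum defining $\dis\t$, so it is at most $\dis\t$.

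Hence every term in the supremum defining $\dis(\t\c\s)$ is bounded by $\dis\s+\dis\t$, and taking the supremum over all pairs gives the claim. This holds also when either distortion is $+\infty$, with the usual convention.

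There is no real obstacle here. The only point requiring care is that the intermediate points $y,y'$ depend on the chosen pairs, and that is harmless because the estimate holds pointwise for each choice.
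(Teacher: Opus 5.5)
Your argument is correct. It is the direct verification from the definition of distortion that the paper intends: the paper states this proposition without proof, as following immediately from the definition, and choosing intermediate points $y,y'$ and applying the triangle inequality for real numbers is exactly that check.
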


Recall that a \emph{set-valued map $f\:X\tom Y$ from a set $X$ to a set $Y$} is any map $f\:X\to\cP_0(Y)$, see details in~\cite{BorGelMyshOb}. In other words, such a map assigns to each point $x\in X$ a nonempty subset $f(x)\ss Y$. Note that some authors do not require $f(x)$ to be nonempty for each $x$. \textbf{In what follows we will identify a set-valued map $f\:X\to\cP_0(Y)$ with the subset $\cup_{x\in X}\bigl(\{x\}\x f(x)\bigr)$ in $X\x Y$}.

A set-valued map $f$ is \emph{surjective} if $\cup_{x\in X}f(x)=Y$. A relation $R\ss X\x Y$ between sets $X$ and $Y$ that is a surjective set-valued map $R\:X\to\cP_0(Y)$, $R\:x\mapsto R(x)=\{y\in Y:(x,y)\in R\}$, is called a \emph{correspondence}. The set of all correspondences between $X$ and $Y$ is denoted by $\cR(X,Y)$.

\begin{thm}[\cite{BurBurIva}]\label{thm:GH-metri-and-relations}
For any nonempty metric spaces $X$ and $Y$,
$$
d_{GH}(X,Y)=\frac12\inf\bigl\{\dis R:R\in\cR(X,Y)\bigr\}.
$$
\end{thm}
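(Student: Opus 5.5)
We prove the two inequalities separately. Write $d=d_{GH}(X,Y)$ and $\rho=\frac12\inf\{\dis R:R\in\cR(X,Y)\}$.

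\emph{Step 1: $\rho\le d$.} Take any $r>d$. Then there exist a metric space $Z$ and subsets $X',Y'\ss Z$ with $X'\approx X$, $Y'\approx Y$ and $d_H(X',Y')<r$. Identify $X$ and $Y$ with $X'$ and $Y'$. Define
$$
R=\bigl\{(x,y)\in X\x Y:|xy|<r\bigr\}.
$$
Since every point of $X$ is at distance $<r$ from $Y$, and vice versa, $R$ is a correspondence. For $(x,y),(x',y')\in R$ the triangle inequality gives $|xx'|\le|xy|+|yy'|+|y'x'|<|yy'|+2r$, and the symmetric bound holds. Hence $\dis R\le 2r$, so $\rho\le r$. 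Letting $r\downarrow d$ gives $\rho\le d$.

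\emph{Step 2: $d\le\rho$.} Take $R\in\cR(X,Y)$ with $\dis R=2r'$. If $\dis R=\infty$ there is nothing to prove, so assume it is finite and choose any $r>r'$ with $r>0$. On the disjoint union $Z=X\sqcup Y$ define $|xx'|$ and $|yy'|$ by the original metrics, and set
$$
|xy|=\inf\bigl\{|xx'|+r+|y'y|:(x',y')\in R\bigr\}.
$$

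This is the main obstacle: checking that $Z$ is a metric space whose restrictions to $X$ and $Y$ are the original metrics. Positivity holds because $r>0$ makes every mixed distance at least $r$. The key case of the triangle inequality is $|xx''|\le|xy|+|yx''|$. Choose near-optimal pairs $(x',y'),(x''',y''')\in R$ for the two mixed distances. Then
$$
|x'x'''|\le|y'y'''|+2r'\le|y'y|+|yy'''|+2r,
$$
so $|xx''|\le|xx'|+|x'x'''|+|x'''x''|$ is bounded by the sum of the two mixed distances up to $\e$. The case $|yy''|\le|yx|+|xy''|$ is symmetric, and the cases with two mixed distances and one same-space distance follow directly from the infimum formula by absorbing the same-space term.

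Now each $(x,y)\in R$ satisfies $|xy|\le r$. Since $R$ is a correspondence, every $x\in X$ lies within $r$ of $Y$, and every $y\in Y$ lies within $r$ of $X$. Hence $d_H(X,Y)\le r$ in $Z$, so $d\le r$. Letting $r\downarrow r'$ and then taking the infimum over $R$ gives $d\le\rho$.

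If $\rho=0$, the argument still works, because at every stage we used a positive $r>r'$.

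Combining Steps 1 and 2 gives $d_{GH}(X,Y)=\frac12\inf\{\dis R:R\in\cR(X,Y)\}$.
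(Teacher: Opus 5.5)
Your proof is correct. It is essentially the standard argument from the source the paper cites for this theorem, \cite{BurBurIva} (the paper itself gives no proof): the correspondence $\{(x,y):|xy|<r\}$ gives $\rho\le d$, and the metric $|xy|=\inf\{|xx'|+r+|y'y|:(x',y')\in R\}$ on $X\sqcup Y$ gives $d\le\rho$. The only deviation is that you take $r>\frac12\dis R$ with $r>0$, obtaining a genuine metric, and then let $r\downarrow\frac12\dis R$. The reference instead takes $r=\frac12\dis R$ directly and allows the result to be a semi-metric.
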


\begin{prop}\label{prop:IsomZeroDis}
Nonempty metric spaces $X$ and $Y$ are isometric if and only if there exists $R\in\cR(X,Y)$ such that $\dis R=0$. All such $R$ are exactly all isometries between $X$ and $Y$ \(we identify maps and their graphs\/\).
\end{prop}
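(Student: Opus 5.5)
We prove the two directions of Proposition~\ref{prop:IsomZeroDis} separately, and then identify the zero-distortion correspondences. Both parts reduce to one observation: a relation of zero distortion is forced to be single-valued and injective.

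\textbf{Isometries have zero distortion.} Suppose $f\:X\to Y$ is an isometry, i.e.\ a bijection with $|f(x)f(x')|=|xx'|$. Its graph $R=\{(x,f(x)):x\in X\}$ is a correspondence: it is a set-valued map with singleton values, and it is surjective because $f$ is onto. For any two pairs $(x,f(x))$ and $(x',f(x'))$ in $R$ we get $\bigl||xx'|-|f(x)f(x')|\bigr|=0$. Hence $\dis R=0$.

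\textbf{Zero distortion gives an isometry.} Let $R\in\cR(X,Y)$ with $\dis R=0$, so $|xx'|=|yy'|$ for all $(x,y),(x',y')\in R$.
\begin{enumerate}
\item Single-valuedness: if $(x,y),(x,y')\in R$, take $x'=x$; then $|yy'|=|xx|=0$, so $y=y'$. Since $R(x)$ is nonempty for every $x$, $R$ is the graph of a map $f\:X\to Y$.
\item Surjectivity of $f$ is exactly the correspondence condition $\cup_{x}R(x)=Y$.
\item Injectivity: if $f(x)=f(x')$, then $|xx'|=|f(x)f(x')|=0$, so $x=x'$.
\item Distance preservation, $|f(x)f(x')|=|xx'|$, is the hypothesis itself.
\end{enumerate}
Therefore $f$ is a bijective distance-preserving map, i.e.\ an isometry, and $X\approx Y$.

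\textbf{The set of zero-distortion correspondences.} The argument above shows that every $R$ with $\dis R=0$ is the graph of an isometry. Conversely, the first part shows that the graph of every isometry is a correspondence of zero distortion. So these $R$ are exactly the isometries, under the identification of maps with their graphs.

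The only step requiring an idea is single-valuedness. It comes from choosing the same point $x$ twice in the definition of distortion. Everything else is direct unpacking of the definitions.
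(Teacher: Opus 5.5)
Your proof is correct and complete. The decisive points are all there: zero distortion forces single-valuedness (take the same $x$ twice) and injectivity, and surjectivity comes from the definition of a correspondence. The paper gives no proof of this proposition, treating it as a standard fact that follows directly from the definitions (cf.~\cite{BurBurIva}), and your argument is exactly that routine verification.
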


\begin{prop}\label{prop:ClosedCorresp}
Let $X$ and $Y$ be nonempty metric spaces, $R\in\cR(X,Y)$ and $\bR\ss X\x Y$ be the closure of $R$. Then $\dis\bR=\dis R$.
\end{prop}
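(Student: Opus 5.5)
Since $R\ss\bR$, Proposition~\ref{prop:DisSubset} gives $\dis R\le\dis\bR$ at once, so the whole task is the reverse inequality $\dis\bR\le\dis R$. Here $\bR$ is the closure of $R$ in $X\x Y$ with the product topology, which is metrizable, for instance by $|(x,y)(x',y')|=\max\bigl\{|xx'|,|yy'|\bigr\}$. Hence every point of $\bR$ is the limit of a sequence of points of $R$. Note also that $\bR\sp R$ is a correspondence, since it contains a surjective set-valued map, although surjectivity plays no role in the distortion inequality.

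Fix $(x,y),(x',y')\in\bR$ and choose sequences $(x_n,y_n)\to(x,y)$ and $(x'_n,y'_n)\to(x',y')$ with all terms in $R$. For each $n$ the definition of distortion gives
$$
\bigl||x_nx'_n|-|y_ny'_n|\bigr|\le\dis R.
$$
The function $(a,b,a',b')\mapsto\bigl||aa'|-|bb'|\bigr|$ is continuous on $X\x Y\x X\x Y$, because each metric is $1$-Lipschitz in each argument. Passing to the limit therefore yields $\bigl||xx'|-|yy'|\bigr|\le\dis R$. Taking the supremum over all pairs in $\bR$ gives $\dis\bR\le\dis R$.

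The case $\dis R=\infty$ is trivial, so no difficulty arises there. The argument is routine, and the only point needing care is the justification of the limit passage. Explicitly, $\bigl||x_nx'_n|-|xx'|\bigr|\le|x_nx|+|x'_nx'|\to0$ by the triangle inequality, and similarly in $Y$. This is the step I would write out in full.
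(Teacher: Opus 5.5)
Your proof is correct and follows essentially the same route as the paper. You get $\dis R\le\dis\bR$ from $R\ss\bR$, and for the reverse inequality you approximate points of $\bR$ by points of $R$ and control the distances via the triangle inequality. The only difference is that you pass to the limit along sequences, whereas the paper works with a fixed $\e$, obtains $\dis\bR\le\dis R+4\e$, and then lets $\e\to0$.
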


\begin{proof}
If $\dis R=\infty$, then also $\dis\bR=\infty$, since $R\ss\bR$, and in this case the equalities are proved.

Now suppose $\dis R<\infty$. Choose arbitrary $(x,y),\,(x',y')\in\bR$, then for each $\e>0$ there exist $(x_1,y_1),\,(x'_1,y'_1)\in R$ such that $|xx_1|<\e$, $|yy_1|<\e$, $|x'x'_1|<\e$, $|y'y'_1|<\e$, but then $|x_1x'_1|-2\e<|xx'|<|x_1x'_1|+2\e$ and $|y_1y'_1|-2\e<|yy'|<|y_1y'_1|+2\e$, therefore
$$
\bigl||xx'|-|yy'|\bigr|<\bigl||x_1x_1'|-|y_1y'_1|\bigr|+4\e,
$$
whence
$$
\dis\bR=\sup_{(x,y),\,(x',y')\in\bR}\bigl||xx'|-|yy'|\bigr|\le\sup_{(x_1,y_1),\,(x'_1,y'_1)\in R}\bigl||x_1x_1'|-|y_1y_1'|\bigr|+4\e=\dis R+4\e,
$$
and since $\e$ is arbitrary, $\dis\bR\le\dis R$. But $R\ss\bR$, so we also have the opposite inequality, and with it the desired equality.
\end{proof}

Denote by $\bcR(X,Y)$ the set of all closed correspondences $R\in\cR(X,Y)$. From Proposition~\ref{prop:ClosedCorresp} and Theorem~\ref{thm:GH-metri-and-relations} we immediately obtain the following result.

\begin{cor}\label{cor:ClosedCorresp}
For any nonempty metric spaces $X$ and $Y$,
$$
d_{GH}(X,Y)=\frac12\inf\bigl\{\dis R:R\in\bcR(X,Y)\bigr\}.
$$
\end{cor}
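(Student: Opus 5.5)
The plan is to prove the two inequalities between $2d_{GH}(X,Y)$ and $\inf\bigl\{\dis R:R\in\bcR(X,Y)\bigr\}$ separately, using Theorem~\ref{thm:GH-metri-and-relations} as the bridge in both directions.

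For the inequality ``$\le$'', observe that $\bcR(X,Y)\ss\cR(X,Y)$ by definition. An infimum taken over a smaller family can only be larger, so
$$
\frac12\inf\bigl\{\dis R:R\in\cR(X,Y)\bigr\}\le\frac12\inf\bigl\{\dis R:R\in\bcR(X,Y)\bigr\}.
$$
By Theorem~\ref{thm:GH-metri-and-relations}, the left-hand side equals $d_{GH}(X,Y)$.

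For the inequality ``$\ge$'', take an arbitrary $R\in\cR(X,Y)$ and let $\bR$ be its closure in $X\x Y$. We check that $\bR$ is again a correspondence. As a subset of $X\x Y$ it contains $R$, so every projection condition satisfied by $R$ remains true. That is, for each $x\in X$ the set $\bR(x)\sp R(x)$ is nonempty, and $\cup_x\bR(x)\sp\cup_xR(x)=Y$. Being closed, $\bR$ therefore belongs to $\bcR(X,Y)$. Proposition~\ref{prop:ClosedCorresp} gives $\dis\bR=\dis R$. Hence
$$
\inf\bigl\{\dis R':R'\in\bcR(X,Y)\bigr\}\le\dis R,
$$
and taking the infimum over $R$ yields the reverse inequality, again via Theorem~\ref{thm:GH-metri-and-relations}.

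No real obstacle arises. The only point requiring care is verifying that closure preserves the correspondence property, and this holds because taking a superset preserves both nonemptiness of fibers and surjectivity. The case of infinite distortion is already handled inside Proposition~\ref{prop:ClosedCorresp}.
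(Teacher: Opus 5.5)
Your proposal is correct and follows the paper's route: the paper derives this corollary directly from Proposition~\ref{prop:ClosedCorresp} and Theorem~\ref{thm:GH-metri-and-relations}, exactly as you do. You also spell out the step the paper leaves implicit, namely that the closure $\bR$ of a correspondence is again a correspondence because it is a superset of one.
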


\begin{prop}\label{prop:EpsNeigToDis}
Let $X$ and $Y$ be nonempty metric spaces, $\e>0$, $R\in\cR(X,Y)$ and $R_\e=U_\e(R)\ss X\x Y$. Then $\dis R=\inf_{\e>0}\dis R_\e=\lim_{\e\to0}\dis R_\e$.
\end{prop}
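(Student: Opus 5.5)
Here $U_\e(R)$ is the open $\e$-neighbourhood of $R$ in $X\x Y$. I assume $X\x Y$ carries the max-metric $\bigl|(x,y)(x',y')\bigr|=\max\{|xx'|,|yy'|\}$, or any metric in which a point $\e$-close to $(x_1,y_1)$ has both coordinates within $\e$; for the sum or $\ell^2$ metric the same argument works. The plan is to compare $\dis R_\e$ with $\dis R$ from both sides, using that the family $R_\e$ is monotone in $\e$.

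\textbf{Lower bound and monotonicity.} Since $R\ss U_\e(R)=R_\e$ for every $\e>0$, Proposition~\ref{prop:DisSubset} gives $\dis R\le\dis R_\e$. For $0<\e'<\e$ we have $R_{\e'}\ss R_\e$, so again by Proposition~\ref{prop:DisSubset} the function $\e\mapsto\dis R_\e$ is nondecreasing. Hence $\lim_{\e\to0}\dis R_\e$ exists in $[0,\infty]$ and equals $\inf_{\e>0}\dis R_\e$, and this common value is at least $\dis R$. Note also that $R_\e$ is nonempty (it contains $R$), so its distortion is defined; we do not need $R_\e$ to be a correspondence.

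\textbf{Upper bound.} If $\dis R=\infty$, all quantities are $\infty$ and there is nothing to prove. Otherwise I repeat the estimate from the proof of Proposition~\ref{prop:ClosedCorresp}. Take $(x,y),(x',y')\in R_\e$. Then there are $(x_1,y_1),(x_1',y_1')\in R$ with $|xx_1|,|yy_1|,|x'x_1'|,|y'y_1'|<\e$. By the triangle inequality,
$$
\bigl||xx'|-|yy'|\bigr|<\bigl||x_1x_1'|-|y_1y_1'|\bigr|+4\e\le\dis R+4\e .
$$
Taking the supremum gives $\dis R_\e\le\dis R+4\e$, so $\inf_{\e>0}\dis R_\e\le\dis R$. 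Combined with the lower bound, this yields the stated equalities.

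\textbf{Main obstacle.} There is essentially none; the only delicate point is the convention for the metric on $X\x Y$. If the sum metric is used, closeness within $\e$ still bounds each coordinate by $\e$, so the constant $4\e$ survives unchanged. One must also check that the limit statement is justified, and this follows from the monotonicity shown in the first step.
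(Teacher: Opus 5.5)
Your proof is correct and matches the paper's: you get the limit and the lower bound from monotonicity via Proposition~\ref{prop:DisSubset}, and the upper bound $\dis R_\e\le\dis R+4\e$ by repeating the estimate from the proof of Proposition~\ref{prop:ClosedCorresp}. Your remark on the choice of product metric on $X\x Y$ makes explicit something the paper leaves implicit.
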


\begin{proof}
If $\dis R=\infty$, then also $\dis R_\e=\infty$ for all $\e$, since $R\ss R_\e$, and in this case the equalities are proved. If $\dis R<\infty$, then $\dis R_\e$ is a monotonically increasing function of $\e$, so $\inf_{\e>0}\dis R_\e=\lim_{\e\to0}\dis R_\e$. The rest repeats verbatim the proof of Proposition~\ref{prop:ClosedCorresp}.
\end{proof}

We recall the necessary definitions from the theory of set-valued maps. Details can be found in~\cite{BorGelMyshOb}.

Let $X$ and $Y$ be topological spaces, then a set-valued map $f\:X\tom Y$ is called
\begin{itemize}
\item \emph{upper semicontinuous at a point $x\in X$} if for any neighborhood $U^{f(x)}$ of $f(x)$ there exists a neighborhood $V^x$ of $x$ such that for all $x'\in V^x$ we have $f(x')\ss U^{f(x)}$, i.e.\ $f(V^x)\ss U^{f(x)}$;
\item \emph{lower semicontinuous at a point $x\in X$} if for any open set $U\ss Y$ such that $f(x_0)\cap U\ne\0$, there exists a neighborhood $V^x$ of $x$ satisfying the following condition: for all $x'\in V^x$ we have $f(x')\cap U\ne\0$;
\item \emph{continuous at a point $x\in X$} if it is both upper and lower semicontinuous at $x$.
\end{itemize}
A set-valued map is \emph{upper semicontinuous, lower semicontinuous, or continuous} if the corresponding condition holds for all points $x\in X$.

\begin{rk}\label{rk:SingleValuedContAsMulti}
If $f$ is a single-valued map, each of the three definitions just given is equivalent to ordinary continuity at a point. Thus, every continuous single-valued map is upper semicontinuous, lower semicontinuous, and continuous in the sense of set-valued maps. In particular, the identity map, considered as a set-valued map, is continuous.
\end{rk}

\begin{prop}\label{prop:ContComposMult}
Let $f\:X\tom Y$ and $g\:Y\tom Z$ be set-valued maps that are either both upper semicontinuous, both lower semicontinuous, or both continuous. Then their composition is also such.
\end{prop}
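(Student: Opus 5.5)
The plan is to check each of the three cases directly from the definitions, with $g\c f\:X\tom Z$ given by $(g\c f)(x)=g\bigl(f(x)\bigr)=\cup_{y\in f(x)}g(y)$. This set is nonempty because $f(x)$ and each $g(y)$ are nonempty, so $g\c f$ is again a set-valued map in our sense. The case of continuity follows at once from the two semicontinuous cases, since continuity at a point means both upper and lower semicontinuity there.

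\emph{Upper semicontinuous case.} Fix $x\in X$ and an open $W\sp g(f(x))$. For each $y\in f(x)$ we have $g(y)\ss W$, so upper semicontinuity of $g$ at $y$ gives an open neighborhood $V_y$ of $y$ with $g(V_y)\ss W$. Put $V=\cup_{y\in f(x)}V_y$; this is an open set containing $f(x)$. By upper semicontinuity of $f$ at $x$ there is a neighborhood $O$ of $x$ with $f(x')\ss V$ for all $x'\in O$. Then for $x'\in O$,
$$
g\bigl(f(x')\bigr)\ss g(V)=\cup_{y\in f(x)}g(V_y)\ss W .
$$
The one point needing care is that $W$ is an arbitrary open set containing the image, not a metric neighborhood. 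This is why I take $V$ to be the union of the $V_y$ rather than a uniform metric neighborhood of $f(x)$; with this choice no compactness of $f(x)$ is needed.

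\emph{Lower semicontinuous case.} Fix $x$ and an open $W\ss Z$ with $g(f(x))\cap W\ne\0$. Choose $y\in f(x)$ with $g(y)\cap W\ne\0$. Lower semicontinuity of $g$ at $y$ gives an open neighborhood $V$ of $y$ such that $g(y')\cap W\ne\0$ for all $y'\in V$. Since $f(x)\cap V\ni y$ is nonempty, lower semicontinuity of $f$ at $x$ gives a neighborhood $O$ of $x$ with $f(x')\cap V\ne\0$ for $x'\in O$. Picking $y'\in f(x')\cap V$, we get $g(y')\cap W\ne\0$, hence $g(f(x'))\cap W\ne\0$.

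I expect no real obstacle. The only subtle step is the upper semicontinuous case, and the union construction above handles it. I would also note that in the definition of lower semicontinuity $f(x_0)$ should be read as $f(x)$, and that taking neighborhoods to be open sets loses no generality.
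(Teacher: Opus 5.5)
Your proof is correct and follows the paper's argument essentially step for step: the same union-of-neighborhoods construction $V=\cup_{y\in f(x)}V_y$ in the upper semicontinuous case, the same chaining of the two lower semicontinuity conditions, and continuity as the conjunction of the two. The only difference is cosmetic. In the lower semicontinuous case you use a single $y\in f(x)$ with $g(y)\cap W\ne\emptyset$, whereas the paper takes the union of neighborhoods over all such $y$, which is unnecessary.
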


\begin{proof}
First, let $f$ and $g$ be upper semicontinuous. Take an arbitrary point $x\in X$ and consider a neighborhood $U:=U^{g(f(x))}$, then for each point $y\in f(x)$ we have $g(y)\ss g(f(x))\ss U$, so $U$ is also a neighborhood of $g(y)$. Since $g$ is upper semicontinuous at $y$, there exists a neighborhood $V^y$ such that $g(V^y)\ss U$. Set $V=\cup_{y\in f(x)}V^y$, then $g(V)\ss U$, and $V$ is a neighborhood of $f(x)$, therefore, by the upper semicontinuity of $f$, there exists a neighborhood $W^x$ such that $f(W^x)\ss V$, whence $g\bigl(f(W^x)\bigr)\ss U$, so $g\c f$ is upper semicontinuous at $x$ and, by the arbitrariness of $x$, upper semicontinuous overall.

Now let $f$ and $g$ be lower semicontinuous. Take an arbitrary point $x\in X$ and consider an open set $U\ss Z$ such that $g\bigl(f(x)\bigr)\cap U\ne\0$. Since $g\bigl(f(x)\bigr)=\cup_{y\in f(x)}g(y)$, there exist $y\in f(x)$ for which $g(y)\cap U\ne\0$. Denote the set of such $y$ by $Y'$. Since $g$ is lower semicontinuous, for each $y\in Y'$ there exists a neighborhood $V^y$ such that for all $y'\in V^y$ we have $g(y')\cap U\ne\0$. Set $V=\cup_{y\in Y'}V^y$, then $V$ is an open set, and for all $y'\in V$ we have $g(y')\cap U\ne\0$ and, moreover, since $\0\ne Y'\ss f(x)$ and $Y'\ss V$, we have $f(x)\cap V\ne\0$. Since $f$ is lower semicontinuous, there exists a neighborhood $W^x$ such that for all $x'\in W^x$ we have $f(x')\cap V\ne\0$. But then $g\bigl(f(x')\bigr)\cap U\ne\0$ for all $x'\in W^x$, so $W^x$ satisfies the lower semicontinuity condition for $g\c f$ at $x$, and, by the arbitrariness of $x$, this map is lower semicontinuous overall.

For continuous set-valued maps, the proof is obtained by concatenating the two previous items.
\end{proof}

\begin{prop}\label{prop:RestrictSemicont}
Let $X$ and $Y$ be nonempty topological spaces, $Z\ss Y$ be a nonempty subset, and $f\:X\tom Y$ be an upper semicontinuous\/ \(lower semicontinuous, continuous\) set-valued map. Then the restriction $f|_Z$ is also such.
\end{prop}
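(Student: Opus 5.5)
The plan is to read the statement with $Z$ a nonempty subset of the \emph{domain} $X$, carrying the subspace topology. The inclusion $Z\ss Y$ looks like a misprint, since the restriction $f|_Z$ only makes sense for a subset of the domain. Then $f|_Z\:Z\tom Y$, $z\mapsto f(z)$, is a set-valued map with nonempty values, so the definitions of semicontinuity apply to it.

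The shortest route is to factor the restriction as a composition. Let $i\:Z\to X$ be the inclusion map. It is continuous for the subspace topology. By Remark~\ref{rk:SingleValuedContAsMulti}, viewed as a set-valued map it is upper semicontinuous, lower semicontinuous and continuous. Clearly $f|_Z=f\c i$. By Proposition~\ref{prop:ContComposMult}, if $f$ is upper semicontinuous (respectively lower semicontinuous, continuous), then so is $f\c i$. This is exactly the claim.

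As a check independent of Proposition~\ref{prop:ContComposMult}, the direct argument is also one line per case. Fix $z\in Z$.

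\textbf{Upper semicontinuity.} Take an open $U\sp f(z)$. Upper semicontinuity of $f$ gives an open $V\ni z$ in $X$ with $f(V)\ss U$. Then $V\cap Z$ is a neighborhood of $z$ in $Z$, and $f|_Z(V\cap Z)\ss U$.

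\textbf{Lower semicontinuity.} Take an open $U$ with $f(z)\cap U\ne\0$. Lower semicontinuity of $f$ gives an open $V\ni z$ with $f(x')\cap U\ne\0$ for all $x'\in V$. Hence the same holds for all $x'\in V\cap Z$.

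\textbf{Continuity.} Combine the two previous cases.

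There is no real obstacle here. The only points needing care are the interpretation of $Z$ and the fact that neighborhoods in $Z$ are traces $V\cap Z$ of neighborhoods in $X$.
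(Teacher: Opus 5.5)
Your proof is correct. Your reading $Z\subset X$, with the subspace topology, is the intended one. The paper's own proof works with the trace $U^x\cap Z$ of a neighborhood of $x$ in $X$. Likewise, the later application in Theorem~\ref{thm:SubsertOfCompactMetrSp} restricts $\bar R\colon \bar Y\rightrightarrows X$ to $Y\subset\bar Y$, i.e.\ to a subset of the domain.

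Your ``check'' paragraph is exactly the paper's argument. The neighborhood $U^x$ supplied by semicontinuity of $f$ at $x\in Z$ already works for $f|_Z$ once intersected with $Z$.

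Your primary route is different. You write $f|_Z=f\circ i$ with the continuous inclusion $i\colon Z\to X$, then invoke Remark~\ref{rk:SingleValuedContAsMulti} and Proposition~\ref{prop:ContComposMult}. This is legitimate: that proposition precedes this one and does not use it, so there is no circularity.

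The composition route buys a slightly stronger statement. Each of the three semicontinuity properties is preserved under precomposition with any continuous single-valued map, and restriction to a subspace is just the case of an inclusion. The price is reliance on the heavier composition argument, which takes unions of neighborhoods. The paper's direct argument is self-contained and needs only that neighborhoods in $Z$ are traces $V\cap Z$ of neighborhoods in $X$.
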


\begin{proof}
In the definition of semicontinuity of $f$ at a point $x\in X$, it is asserted that if some property holds for $f(x)$, then there exists a neighborhood $U^x$ such that the same property also holds for $f(x')$ for all points $x'\in U^x$. But for $f|_Z$, semicontinuity requires the property to hold for all $x'\in U^x\cap Z$, which, of course, also holds.
\end{proof}

For metric spaces $X$ and $Y$, set
\begin{itemize}
\item $\cR_{us}(X,Y)=\{R\in\cR(X,Y):\text{$R,\,R^{-1}$ are upper semicontinuous}\}$;
\item $\cR_{ls}(X,Y)=\{R\in\cR(X,Y):\text{$R,\,R^{-1}$ are lower semicontinuous}\}$;
\item $\cR_{rc}(X,Y)=\{R\in\cR(X,Y):\text{$R,\,R^{-1}$ are continuous}\}$.
\end{itemize}

The following obvious inclusions hold:
\begin{equation}\label{eq:RInclusions}
\cR(X,Y)\sp\cR_{us}(X,Y)\cup\cR_{ls}(X,Y)\sp\cR_{us}(X,Y)\cap\cR_{ls}(X,Y)=\cR_{rc}(X,Y).
\end{equation}

Many of the facts presented below hold simultaneously for all the families of correspondences introduced above, so for brevity, when referring to one of these families, we will denote it by $\cR_{gen}$, i.e.\ $\cR_{gen}\in\{\cR,\cR_{us},\cR_{ls},\cR_{rc}\}$. For example, we prove the following general result.

\begin{prop}\label{prop:NonEmptyCorresp}
Each family $\cR_{gen}$ is nonempty.
\end{prop}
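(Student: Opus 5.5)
By the inclusions~(\ref{eq:RInclusions}), it suffices to exhibit one correspondence in $\cR_{rc}(X,Y)$, since $\cR_{rc}(X,Y)$ is contained in each of $\cR$, $\cR_{us}$ and $\cR_{ls}$. The natural candidate is the full relation $R=X\x Y$, viewed as the set-valued map $R\:x\mapsto Y$ for every $x\in X$. It is a correspondence because $R(x)=Y$ is nonempty for each $x$ (here we use that $Y\ne\0$) and $\cup_{x\in X}R(x)=Y$. Its inverse $R^{-1}=Y\x X$ is the constant map $y\mapsto X$, which is likewise a correspondence because $X\ne\0$.

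We check continuity directly from the definitions, using that a constant set-valued map is continuous. For upper semicontinuity of $R$ at a point $x$, note that any neighborhood $U^{R(x)}$ of $R(x)=Y$ must contain $Y$. Hence $V^x=X$ works, since $R(x')=Y\ss U^{R(x)}$ for all $x'$. For lower semicontinuity at $x$, take an open set $U\ss Y$ with $R(x)\cap U=U\ne\0$. Then for every $x'\in X$ we have $R(x')\cap U=U\ne\0$, so again $V^x=X$ works. The same argument, with the roles of $X$ and $Y$ exchanged, applies to $R^{-1}$. Thus $R,R^{-1}$ are continuous, so $R\in\cR_{rc}(X,Y)$.

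There is no real obstacle. The only point needing care is that both $X$ and $Y$ must be nonempty, so that $R$ and $R^{-1}$ take nonempty values and are genuinely set-valued maps in the paper's sense; this is part of the standing assumptions. The fact that $\dis R$ may be infinite is irrelevant here, since the statement only asserts that each family is nonempty.
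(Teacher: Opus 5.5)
Your proposal is correct and follows the paper's proof: the paper also takes $R=X\x Y$, notes that it is continuous, so $R\in\cR_{rc}(X,Y)$, and then uses the inclusions~(\ref{eq:RInclusions}) to conclude that the other families are nonempty. Your explicit check of upper and lower semicontinuity for the constant maps $x\mapsto Y$ and $y\mapsto X$ just spells out what the paper leaves as a one-line observation.
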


\begin{proof}
Note that the correspondence $R:=X\x Y$ is continuous, so $\cR_{rc}(X,Y)$, and together with it, by inclusions~(\ref{eq:RInclusions}), $\cR_{us}(X,Y)$, $\cR_{ls}(X,Y)$ and $\cR(X,Y)$ are always nonempty. 
\end{proof}

Furthermore, Remark~\ref{rk:SingleValuedContAsMulti} and Proposition~\ref{prop:ContComposMult} imply

\begin{cor}\label{cor:PreserveCompose}
For any nonempty metric spaces $X$, $Y$ and $Z$,
\begin{itemize}
\item if $\nu\:X\to Y$ is a homeomorphism\/ \(in particular, an isometry\/\), then
$$
\nu\in\cR(X,Y)\cap\cR_{us}(X,Y)\cap\cR_{ls}(X,Y)\cap\cR_{rc}(X,Y);
$$
\item for each $\cR_{gen}$,
$$
\cR_{gen}(Y,Z)\c\cR_{gen}(X,Y)\ss\cR_{gen}(X,Z).
$$
\end{itemize}
\end{cor}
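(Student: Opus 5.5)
The first item is essentially a restatement of Remark~\ref{rk:SingleValuedContAsMulti}. Let $\nu\:X\to Y$ be a homeomorphism. Because $\nu$ is bijective, its graph is a correspondence: every $x$ has the image $\nu(x)$, and surjectivity gives $\cup_x\nu(x)=Y$. So $\nu\in\cR(X,Y)$. The inverse relation $\nu^{-1}$ is the graph of the inverse map, which is single-valued and continuous because $\nu$ is a homeomorphism. By Remark~\ref{rk:SingleValuedContAsMulti}, both $\nu$ and $\nu^{-1}$ are upper semicontinuous, lower semicontinuous and continuous as set-valued maps. Hence $\nu$ belongs to $\cR_{us}(X,Y)$, $\cR_{ls}(X,Y)$ and $\cR_{rc}(X,Y)$ as well. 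An isometry is a homeomorphism, so it is a special case.

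For the second item, take $R\in\cR_{gen}(X,Y)$ and $S\in\cR_{gen}(Y,Z)$, and set $T=S\c R=\{(x,z):\exists y,\ (x,y)\in R,\ (y,z)\in S\}$.

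\textbf{Step 1: $T$ is a correspondence.} Fix $x$. The set $R(x)$ is nonempty, and each $y\in R(x)$ has nonempty $S(y)$, so $T(x)=S\bigl(R(x)\bigr)\ne\0$. For surjectivity, take any $z\in Z$. There is $y$ with $z\in S(y)$, since $S$ is surjective. There is $x$ with $y\in R(x)$, since $R$ is surjective. Then $(x,z)\in T$. This settles the case $\cR_{gen}=\cR$.

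\textbf{Step 2: the semicontinuity classes.} As set-valued maps, $T=S\c R$, with $R\:X\tom Y$ and $S\:Y\tom Z$ both of the required type. Proposition~\ref{prop:ContComposMult} then shows $T$ is of the same type. For the inverse, I use the identity $(S\c R)^{-1}=R^{-1}\c S^{-1}$, where $S^{-1}\:Z\tom Y$ and $R^{-1}\:Y\tom X$ are again of the required type and take nonempty values because $S$ and $R$ are surjective. Applying Proposition~\ref{prop:ContComposMult} once more shows $T^{-1}$ is of the same type.

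The only point that needs care is that Proposition~\ref{prop:ContComposMult} applies to set-valued maps with nonempty values. This is guaranteed here because correspondences and their inverses are everywhere defined, which was checked in Step~1. Otherwise the statement follows directly from the earlier results, so I expect no real obstacle.
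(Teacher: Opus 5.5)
Your proof is correct and follows the paper's route: the paper obtains the first item from Remark~\ref{rk:SingleValuedContAsMulti} and the second from Proposition~\ref{prop:ContComposMult}. You also make explicit two points the paper leaves implicit. One is that $S\c R$ is again a correspondence. The other is that its inverse is handled by the identity $(S\c R)^{-1}=R^{-1}\c S^{-1}$.
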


We define the following modifications of the Gromov--Hausdorff distance, namely,
\begin{itemize}
\item $d^{us}_{GH}(X,Y)=\frac12\inf\bigl\{\dis R:R\in\cR_{us}(X,Y)\bigr\}$ is called the \emph{upper semicontinuous $GH$-distance};
\item $d^{ls}_{GH}(X,Y)=\frac12\inf\bigl\{\dis R:R\in\cR_{ls}(X,Y)\bigr\}$ is called the \emph{lower semicontinuous $GH$-distance};
\item $d^{rc}_{GH}(X,Y)=\frac12\inf\bigl\{\dis R:R\in\cR_{rs}(X,Y)\bigr\}$ is called the \emph{$R$-continuous $GH$-distance}.
\end{itemize}

\begin{examp}\label{examp:DiscBoth}
If $f\:X\tom Y$ is an arbitrary set-valued map, it is continuous. Indeed, for any point $x$, we can choose $U^x:=\{x\}$ as a neighborhood of $x$, and then for each point $x'\in U^x$ we have $f(x')=f(x)\ss f(x)$ and $f(x')\cap f(x)=f(x)\ne\0$, therefore for any open set $V\ss Y$ that either contains $f(x)$ or intersects $f(x)$, the same property also holds for all $x'\in U^x$, which proves the continuity of $f$. It follows that $\cR(X,Y)=\cR_{us}(X,Y)=\cR_{ls}(X,Y)=\cR_{rc}(X,Y)$, hence $d_{GH}(X,Y)=d^{us}_{GH}(X,Y)=d^{ls}_{GH}(X,Y)=d^{rc}_{GH}(X,Y)$.

Thus, for discrete $X$ and $Y$, all the Gromov--Hausdorff distances defined above are equal.
\end{examp}

From inclusions~(\ref{eq:RInclusions}) we immediately obtain the following result.
\begin{prop}\label{prop:GHleCGH}
For any metric spaces $X$ and $Y$,
\begin{itemize}
\item $d_{GH}(X,Y)\le\min\bigl\{d^{us}_{GH}(X,Y),\,d^{ls}_{GH}(X,Y),\,d^{rc}_{GH}(X,Y)\bigr\}$\rom;
\item $d^{rc}_{GH}(X,Y)\ge\max\bigl\{d^{us}_{GH}(X,Y),\,d^{ls}_{GH}(X,Y)\bigr\}$.
\end{itemize}
\end{prop}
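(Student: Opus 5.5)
Both items follow from the inclusions~(\ref{eq:RInclusions}) together with the elementary fact that the infimum of a function over a smaller set is at least its infimum over a larger set. By Proposition~\ref{prop:NonEmptyCorresp}, every family $\cR_{gen}(X,Y)$ is nonempty, so every infimum involved is taken over a nonempty set and is a well-defined element of $[0,\infty]$. Consequently, the comparisons between the distances are meaningful even when some of them are infinite.

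For the first item, I will use Theorem~\ref{thm:GH-metri-and-relations} to write $d_{GH}(X,Y)=\frac12\inf\{\dis R:R\in\cR(X,Y)\}$. By~(\ref{eq:RInclusions}), each of $\cR_{us}(X,Y)$, $\cR_{ls}(X,Y)$ and $\cR_{rc}(X,Y)$ is contained in $\cR(X,Y)$. Hence
\[
\inf_{R\in\cR(X,Y)}\dis R\le\inf_{R\in\cR_{\ast}(X,Y)}\dis R,
\qquad \ast\in\{us,ls,rc\}.
\]
Multiplying by $\frac12$ gives $d_{GH}(X,Y)\le d^{\ast}_{GH}(X,Y)$ for each of the three modified distances, and therefore $d_{GH}(X,Y)$ is at most their minimum.

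For the second item, I will use the equality $\cR_{rc}(X,Y)=\cR_{us}(X,Y)\cap\cR_{ls}(X,Y)$ from~(\ref{eq:RInclusions}). It gives $\cR_{rc}(X,Y)\ss\cR_{us}(X,Y)$ and $\cR_{rc}(X,Y)\ss\cR_{ls}(X,Y)$. The same monotonicity of the infimum then yields $d^{rc}_{GH}\ge d^{us}_{GH}$ and $d^{rc}_{GH}\ge d^{ls}_{GH}$, hence $d^{rc}_{GH}(X,Y)\ge\max\{d^{us}_{GH}(X,Y),d^{ls}_{GH}(X,Y)\}$.

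There is no real obstacle here. The only point I will state explicitly is that the definition of $d^{rc}_{GH}$ takes its infimum over the family $\cR_{rc}$; the definition as printed writes $\cR_{rs}$, which I read as a typo for $\cR_{rc}$.
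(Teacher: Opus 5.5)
Your proof is correct and matches the paper, which obtains both items directly from the inclusions~(\ref{eq:RInclusions}) by monotonicity of the infimum over smaller families of correspondences. Invoking Proposition~\ref{prop:NonEmptyCorresp} is harmless but not needed, since an infimum over an empty family would be $+\infty$ and the inequalities would still hold; you are also right that $\cR_{rs}$ in the definition of $d^{rc}_{GH}$ is a typo for $\cR_{rc}$.
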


\section{General properties of modified $GH$-distances}
\markright{\thesection.~General properties of modified $GH$-distances}

From the definitions of the distances under consideration, we immediately conclude that they are all nonnegative, equal to zero between each space and itself, and symmetric. It turns out that all these distances also satisfy the triangle inequality. By analogy with how we treated the sets of correspondences of different types, we denote by $d_{GH}^{gen}$ any of the variants of the Gromov--Hausdorff distance introduced above, i.e.\ $d_{GH}^{gen}\in\{d_{GH},d^{us}_{GH},d^{ls}_{GH},d^{rc}_{GH}\}$.

\begin{thm}\label{thm:AllGenPseudometrics}
All the Gromov--Hausdorff distances defined above are generalized pseudometrics.
\end{thm}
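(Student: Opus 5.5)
We have to check four properties for each $d^{gen}_{GH}$ with $\cR_{gen}\in\{\cR,\cR_{us},\cR_{ls},\cR_{rc}\}$: values in $[0,\infty]$ (this is what ``generalized'' allows), vanishing on the diagonal, symmetry, and the triangle inequality. The argument is uniform in the choice of $\cR_{gen}$ and uses only the closure properties in Corollary~\ref{cor:PreserveCompose} together with Propositions~\ref{prop:NonEmptyCorresp} and~\ref{prop:DisCompose}.

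The first three properties are immediate. By Proposition~\ref{prop:NonEmptyCorresp}, $\cR_{gen}(X,Y)\ne\0$, so the infimum is over a nonempty set of numbers in $[0,\infty]$; hence $d^{gen}_{GH}(X,Y)\in[0,\infty]$ is well defined. The identity map $\id_X$ is an isometry, so by Corollary~\ref{cor:PreserveCompose} it lies in $\cR_{gen}(X,X)$, and it has zero distortion. Therefore $d^{gen}_{GH}(X,X)=0$. For symmetry, the map $R\mapsto R^{-1}$ sends $\cR(X,Y)$ bijectively onto $\cR(Y,X)$ and preserves distortion, since the defining supremum is symmetric in the roles of the two spaces. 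The conditions defining $\cR_{us}$, $\cR_{ls}$ and $\cR_{rc}$ are imposed on $R$ and $R^{-1}$ together, so they are also invariant under inversion. Hence $\cR_{gen}(Y,X)=\{R^{-1}:R\in\cR_{gen}(X,Y)\}$, and $d^{gen}_{GH}(X,Y)=d^{gen}_{GH}(Y,X)$.

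For the triangle inequality $d^{gen}_{GH}(X,Z)\le d^{gen}_{GH}(X,Y)+d^{gen}_{GH}(Y,Z)$, we may assume the right-hand side is finite. Fix $\e>0$ and choose $R_1\in\cR_{gen}(X,Y)$ and $R_2\in\cR_{gen}(Y,Z)$ with $\dis R_1<2d^{gen}_{GH}(X,Y)+\e$ and $\dis R_2<2d^{gen}_{GH}(Y,Z)+\e$. We then need three facts about $R_2\c R_1$:
\begin{itemize}
\item it is a correspondence;
\item it belongs to $\cR_{gen}(X,Z)$;
\item its distortion is at most $\dis R_1+\dis R_2$.
\end{itemize}
It is a correspondence because both factors are surjective set-valued maps with nonempty values: every $x$ has some $y\in R_1(x)$ and then some $z\in R_2(y)$, and every $z$ arises symmetrically. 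In particular $R_2\c R_1\ne\0$. Membership in $\cR_{gen}(X,Z)$ is the second item of Corollary~\ref{cor:PreserveCompose}, using $(R_2\c R_1)^{-1}=R_1^{-1}\c R_2^{-1}$ for the inverse. The distortion bound is Proposition~\ref{prop:DisCompose}. Thus
$$
2d^{gen}_{GH}(X,Z)\le\dis(R_2\c R_1)\le\dis R_1+\dis R_2<2d^{gen}_{GH}(X,Y)+2d^{gen}_{GH}(Y,Z)+2\e,
$$
and letting $\e\to0$ gives the inequality.

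The only nontrivial ingredient is the closure of each $\cR_{gen}$ under composition. This rests on Proposition~\ref{prop:ContComposMult} and has already been recorded in Corollary~\ref{cor:PreserveCompose}, so in the proof I only need to point out that it applies to both $R_2\c R_1$ and its inverse. Apart from that, I will note explicitly that the distances may take the value $\infty$ and may vanish between non-isometric spaces. These are exactly the two relaxations built into the term ``generalized pseudometric'', so neither needs to be excluded.
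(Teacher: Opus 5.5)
Your proof is correct and follows essentially the same route as the paper. The paper also treats nonnegativity, vanishing on the diagonal and symmetry as immediate, and proves the triangle inequality by composing near-optimal $R_1\in\cR_{gen}(X,Y)$ and $R_2\in\cR_{gen}(Y,Z)$, using Corollary~\ref{cor:PreserveCompose} for $R_2\c R_1\in\cR_{gen}(X,Z)$ and Proposition~\ref{prop:DisCompose} for the distortion bound. Your extra checks, that $R_2\c R_1$ is a nonempty correspondence and that its inverse $R_1^{-1}\c R_2^{-1}$ is also semicontinuous, only make explicit what the paper leaves implicit.
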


\begin{proof}
It suffices to verify the triangle inequality. Choose arbitrary nonempty metric spaces $X$, $Y$, $Z$ and show that $d_{GH}^{gen}(X,Z)\le d_{GH}^{gen}(X,Y)+d_{GH}^{gen}(Y,Z)$. If one of $d_{GH}^{gen}(X,Y)$ and $d_{GH}^{gen}(Y,Z)$ is infinite, the inequality holds. Now suppose both are finite. Choose an arbitrary $\e>0$ and find correspondences $R_1\in\cR_{gen}(X,Y)$ and $R_2\in\cR_{gen}(Y,Z)$ such that $2d_{GH}^{gen}(X,Y)\ge\dis R_1-\e$ and $2d_{GH}^{gen}(Y,Z)\ge\dis R_2-\e$, then $R:=R_2\c R_1\in\cR_{gen}(X,Z)$ by Corollary~\ref{cor:PreserveCompose}, and by Proposition~\ref{prop:DisCompose},
$$
2d_{GH}^{gen}(X,Z)\le\dis R\le\dis R_1+\dis R_2\le2d_{GH}^{gen}(X,Y)+2d_{GH}^{gen}(Y,Z)+2\e,
$$
from which we obtain the required result by the arbitrariness of $\e$.
\end{proof}

Furthermore, Corollary~\ref{cor:PreserveCompose} and Proposition~\ref{prop:IsomZeroDis} lead to

\begin{cor}\label{cor:IsomZedoGH}
Let $X$ and $Y$ be nonempty isometric metric spaces, then $d_{GH}^{gen}(X,Y)=0$.
\end{cor}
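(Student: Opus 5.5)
Let $\nu\:X\to Y$ be an isometry. The idea is to exhibit an admissible correspondence of zero distortion in each family $\cR_{gen}(X,Y)$, so that the infimum defining $d^{gen}_{GH}(X,Y)$ is forced to be $0$.

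Regard $\nu$ as a relation, i.e.\ identify it with its graph $\{(x,\nu(x)):x\in X\}\ss X\x Y$. Since $\nu$ is surjective, this graph is a correspondence. Since $\nu$ is an isometry, it is in particular a homeomorphism. The first item of Corollary~\ref{cor:PreserveCompose} then gives
$$
\nu\in\cR(X,Y)\cap\cR_{us}(X,Y)\cap\cR_{ls}(X,Y)\cap\cR_{rc}(X,Y),
$$
so $\nu\in\cR_{gen}(X,Y)$ whichever family $\cR_{gen}$ is meant. The semicontinuity requirement on the inverse relation is already covered by that corollary. If one wants to see it directly, $\nu^{-1}$ is again a single-valued isometry, hence continuous, and Remark~\ref{rk:SingleValuedContAsMulti} applies.

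Next, by Proposition~\ref{prop:IsomZeroDis} an isometry has zero distortion. This is also immediate from the definition: for $(x,\nu(x)),(x',\nu(x'))\in\nu$ we have $\bigl||xx'|-|\nu(x)\nu(x')|\bigr|=0$. Hence
$$
0\le d^{gen}_{GH}(X,Y)\le\frac12\dis\nu=0,
$$
which proves the claim.

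No step is a real obstacle. The only point needing care is that membership in $\cR_{us}$, $\cR_{ls}$ and $\cR_{rc}$ requires semicontinuity of both $\nu$ and $\nu^{-1}$, and the homeomorphism clause of Corollary~\ref{cor:PreserveCompose} provides exactly that.
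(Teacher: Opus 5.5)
Your proposal is correct and follows exactly the route the paper indicates. The paper derives this corollary from Corollary~\ref{cor:PreserveCompose}, which puts the isometry in every $\cR_{gen}(X,Y)$, and Proposition~\ref{prop:IsomZeroDis}, which gives it zero distortion; that is precisely your argument.
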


In what follows, by $\GH$ we denote the proper class in the sense of von Neumann--Bernays--G\"odel set theory~\cite{Mendelson, TBanach} consisting of all metric spaces considered up to isometry. From Corollary~\ref{cor:IsomZedoGH} it follows that all four variants of the Gromov--Hausdorff distance under consideration are well-defined on $\GH$. The proper class $\GH$ is called the \emph{Gromov--Hausdorff class}. The subclass of $\GH$ consisting of all bounded metric spaces is denoted by $\cB$, and the subclass of $\cB$ of all compact metric spaces is denoted by $\cM$. Note that the class $\cB$ is proper. The class $\cM$, however, is a set, called the \emph{Gromov--Hausdorff space}.

For a metric space $X$, denote by $\diam X$ its \emph{diameter\/}:
$$
\diam X=\sup\bigl\{|xy|:x,y\in X\bigr\}.
$$
For a one-point metric space we reserve the notation $\D_1$. If $\l>0$, then $\l X$ denotes the metric space obtained from $X$ by multiplying all distances by $\l$. If $X$ is bounded, set $0 X=\D_1$. Note that $\diam(\l X)=\l\diam X$.

In the following proposition, we show that the main properties of the ordinary Gromov--Hausdorff distance (see~\cite{BurBurIva}) also hold for its continuous analogues.

\begin{prop}\label{prop:GHelementProps}
For any $X,Y\in\GH$,
\begin{enumerate}
\item\label{prop:GHelementProps:1} $2d^{gen}_{GH}(\D_1,X)=\diam X$\rom;
\item\label{prop:GHelementProps:2} $2d^{gen}_{GH}(X,Y)\le\max\{\diam X,\diam Y\}$\rom;
\item\label{prop:GHelementProps:3} if the diameter of $X$ or $Y$ is finite, then $\bigl|\diam X-\diam Y\bigr|\le2d^{gen}_{GH}(X,Y)$\rom;
\item\label{prop:GHelementProps:4} if the diameter of $X$ is finite, then for any $\l\ge0$, $\mu\ge0$, $2d^{gen}_{GH}(\l X,\mu X)=|\l-\mu|\diam X$, from which it immediately follows that the curve $\g(t):=t\,X$ is a shortest path between any of its points, and the length of such a segment of the curve equals the distance between its endpoints\rom;
\item\label{prop:GHelementProps:5} for any $\l>0$, $d^{gen}_{GH}(\l X,\l Y)=\l\,d^{gen}_{GH}(X,Y)$, and if $X$ and $Y$ are bounded, the equality also holds for $\l=0$\rom;
\item\label{prop:GHelementProps:6} if $X$ and $Y$ are discrete metric spaces, then $d^{gen}_{GH}(X,Y)=d_{GH}(X,Y)$\rom;
\item\label{prop:GHelementProps:7} if $X_1,\,X_2,\ldots$ is a sequence of metric spaces converging with respect to $d^{gen}_{GH}$ to a metric space $X$, then it also converges with respect to $d_{GH}$ to this $X$.
\end{enumerate}
\end{prop}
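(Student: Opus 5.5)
The plan is to prove the seven items in order. Items~\ref{prop:GHelementProps:1}, \ref{prop:GHelementProps:2} and~\ref{prop:GHelementProps:4} each need an explicit correspondence that lies in $\cR_{rc}$, together with a lower bound that holds already for $d_{GH}$. The remaining items then follow formally.

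For item~\ref{prop:GHelementProps:1}, the only correspondence between $\D_1$ and $X$ is $\D_1\x X$. By Proposition~\ref{prop:NonEmptyCorresp} this is the full product, which is continuous, so it belongs to every $\cR_{gen}$. Its distortion is $\diam X$.

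For item~\ref{prop:GHelementProps:2}, I use $R=X\x Y$, which lies in $\cR_{rc}(X,Y)$. Its distortion is at most $\max\{\diam X,\diam Y\}$, because $\bigl||xx'|-|yy'|\bigr|\le\max\{|xx'|,|yy'|\}$.

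For item~\ref{prop:GHelementProps:3}, I combine the triangle inequality (Theorem~\ref{thm:AllGenPseudometrics}) with item~\ref{prop:GHelementProps:1}:
$$
\diam X=2d^{gen}_{GH}(\D_1,X)\le2d^{gen}_{GH}(\D_1,Y)+2d^{gen}_{GH}(Y,X),
$$
and symmetrically with $X$ and $Y$ exchanged. If one diameter is finite and the other infinite, then $d_{GH}=\infty$ and the inequality is trivial. Alternatively, one can quote the classical inequality together with Proposition~\ref{prop:GHleCGH}.

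For item~\ref{prop:GHelementProps:4}, the lower bound $|\l-\mu|\diam X$ comes from item~\ref{prop:GHelementProps:3}. If one of $\l,\mu$ is $0$, the claim is exactly item~\ref{prop:GHelementProps:1}. For $\l,\mu>0$, I take as correspondence the identity map $\l X\to\mu X$. This is a homeomorphism, so it lies in every family by Corollary~\ref{cor:PreserveCompose}. Its distortion is $\sup|\l-\mu|\,|xx'|=|\l-\mu|\diam X$. The statement about the curve $t\,X$ then follows: the distance between $sX$ and $tX$ equals $|s-t|\diam X/2$, which is additive along the parameter, so the length of any segment equals the distance between its endpoints.

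For item~\ref{prop:GHelementProps:5}, take $\l>0$. The spaces $\l X$ and $X$ have the same topology, so a relation $R$ is semicontinuous as a relation between $X$ and $Y$ exactly when it is semicontinuous between $\l X$ and $\l Y$. Thus $\cR_{gen}(\l X,\l Y)=\cR_{gen}(X,Y)$ as sets, and the distortion scales by $\l$. For $\l=0$, both sides equal $0$, since $0X=0Y=\D_1$ and $\l\,d^{gen}_{GH}(X,Y)=0$ (finite by item~\ref{prop:GHelementProps:2}).

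Item~\ref{prop:GHelementProps:6} is exactly Example~\ref{examp:DiscBoth}.

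Item~\ref{prop:GHelementProps:7} follows from the inequality $d_{GH}\le d^{gen}_{GH}$ in Proposition~\ref{prop:GHleCGH}.

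I expect the main care to be needed in item~\ref{prop:GHelementProps:4}, in the degenerate and infinite cases: ensuring the identification of $\l X$ and $\mu X$ is a legitimate homeomorphism when $\l,\mu>0$, and routing the case where one scale is $0$ to item~\ref{prop:GHelementProps:1}. Everything else reduces to membership of $X\x Y$ or of homeomorphisms in $\cR_{rc}$, and to the inclusion $\cR_{rc}\subset\cR_{gen}$.
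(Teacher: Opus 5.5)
Your proof is correct. For items 1, 2, 4, 5, 6 and 7 it follows essentially the paper's route: the full product $X\x Y$ and the identity map serve as the witnessing correspondences, item 5 uses rescaling invariance, item 6 is Example~\ref{examp:DiscBoth}, and item 7 uses $d_{GH}\le d^{gen}_{GH}$.

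The genuine difference is in item 3. The paper uses a direct estimate. It takes any $R\in\cR_{gen}(X,Y)$ and pairs $(x_n,y_n),(x'_n,y'_n)\in R$ with $|x_nx'_n|\to\diam X$, and concludes $\dis R\ge\diam X-\diam Y$ for every single correspondence. You instead derive it formally from item 1 and the triangle inequality of Theorem~\ref{thm:AllGenPseudometrics}, via $\diam X=2d^{gen}_{GH}(\D_1,X)\le\diam Y+2d^{gen}_{GH}(X,Y)$. The paper's argument is self-contained at the level of individual correspondences and does not need the triangle inequality. Yours is shorter and shows that item 3 follows from the pseudometric structure together with item 1.

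A smaller difference is in item 2. The paper only uses that $\cR_{gen}(X,Y)$ is nonempty: every correspondence lies in $X\x Y$, so its distortion is at most $\dis(X\x Y)$. That is what lets the argument transfer to the general families $\cR_g$ in the subsequent Remark. You use the membership $X\x Y\in\cR_{rc}$ directly, which is equivalent for the four concrete families.

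Your handling of degenerate cases is more careful than the paper's:
\begin{itemize}
\item In item 4, when $\l$ or $\mu$ is $0$, the ``identity map'' $\l X\to\mu X$ is not a homeomorphism, so routing that case to item 1 is the right fix.
\item In item 5, the paper leaves implicit that $\cR_{gen}(\l X,\l Y)=\cR_{gen}(X,Y)$ (same topology) and does not address $\l=0$. There the finiteness of $d^{gen}_{GH}(X,Y)$ from item 2 is needed to avoid $0\cdot\infty$, as you note.
\end{itemize}
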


\begin{proof}
(\ref{prop:GHelementProps:1}) Note that the set $\cR(\D_1,X)$ consists of exactly one correspondence $R:=\D_1\x X$, and since the remaining $\cR_{gen}(\D_1,X)$ contained in this set are always nonempty by Proposition~\ref{prop:NonEmptyCorresp}, they all coincide with $\{R\}$. It remains to note that $\dis R=\diam X$.

(\ref{prop:GHelementProps:2}) Since each $R\in\cR(X,Y)$ is contained in the correspondence $X\x Y$, by Proposition~\ref{prop:DisSubset} we have $\dis R\le\dis(X\x Y)$. It remains to note that $\dis(X\x Y)=\max\{\diam X,\diam Y\}$.

(\ref{prop:GHelementProps:3}) If $\diam X=\diam Y$, the inequality holds. Let $\diam X>\diam Y$, then the diameter of $Y$ is finite. Choose arbitrary $R\in\cR_{gen}(X,Y)$ and consider sequences $(x_n,y_n),\,(x'_n,y'_n)\in R$ such that $|x_nx'_n|\to\diam X$ as $n\to\infty$. Then, starting from some $n$, we have $|x_nx'_n|>\diam Y\ge|y_ny'_n|$, whence for all such $n$,
$$
\bigl||x_nx'_n|-|y_ny'_n|\bigr|=|x_nx'_n|-|y_ny'_n|\ge|x_nx'_n|-\diam Y\to(\diam X-\diam Y)\ \text{as $n\to\infty$},
$$
so $\dis R\ge\diam X-\diam Y=|\diam X-\diam Y|$. By the arbitrariness of $R$, we obtain the required result.

(\ref{prop:GHelementProps:4}) By item~(\ref{prop:GHelementProps:3}) we have $2d^{gen}_{GH}(\l X,\mu X)\ge\bigl|\diam(\l X)-\diam(\mu X)\bigr|=|\l-\mu|\diam X$. To prove the reverse inequality, let $R\:\l X\to\mu X$ be the identity map, then, by Corollary~\ref{cor:PreserveCompose}, this $R$ is contained in all four $\R_{gen}(X,X)$. It remains to note that $\dis R=|\l-\mu|\diam X$.

(\ref{prop:GHelementProps:5}) For each $R\in\cR(X,Y)$ denote by $R_\l$ the same correspondence, but considered as an element of $\cR(\l X,\l Y)$. Then $\dis R_\l=\l\dis R$, from which the statement of this item follows.

(\ref{prop:GHelementProps:6}) This follows from the fact that all maps between $X$ and $Y$, both single-valued and set-valued, are continuous.

(\ref{prop:GHelementProps:7}) This follows from the inequality $d_{GH}\le d^{gen}_{GH}$.
\end{proof}

\begin{rk}
Analysis of the facts we used in the proofs of Theorem~\ref{thm:AllGenPseudometrics} and Proposition~\ref{prop:GHelementProps} (except item~\ref{prop:GHelementProps:6}) allows us to obtain the following generalization. For each pair $(X,Y)$ of metric spaces, denote by $\cR_g(X,Y)$ a \emph{nonempty\/} subset of $\cR(X,Y)$ and require that the following properties hold:
\begin{itemize}
\item for any metric space $X$, the identity map $\id$ is contained in $\cR_g(X,X)$;
\item for any metric spaces $X$, $Y$ and $Z$, $\cR_g(Y,Z)\c\cR_g(X,Y)\ss\cR_g(X,Z)$;
\item for any metric spaces $X$, $Y$, $\cR_g(Y,X)=\bigl[\cR_g(X,Y)\bigr]^{-1}$.
\end{itemize}
Note that the first two conditions define a \emph{category} whose \emph{objects\/} are metric spaces and whose \emph{morphisms} are correspondences from $\cR_g(X,Y)$.

Define the Gromov--Hausdorff \emph{$g$-distance\/} by the formula
$$
d^g_{GH}(X,Y)=\frac12\inf\bigl\{\dis R:R\in\cR_g(X,Y)\bigr\}.
$$
Then $d^g_{GH}(X,Y)$ satisfies Theorem~\ref{thm:AllGenPseudometrics} and Proposition~\ref{prop:GHelementProps} (except possibly item~\ref{prop:GHelementProps:6}), where $\cR_{gen}$ is replaced by $\cR_g$.
\end{rk}

\section{Coincidence of the semicontinuous $GH$-distance with the classical one}
\markright{\thesection.~Coincidence of the semicontinuous $GH$-distance with the classical one}

We begin with the case of the lower semicontinuous distance.

\subsection{Lower semicontinuous $GH$-distance}

Let $f\:X\tom Y$ be a set-valued map of sets. The \emph{full preimage} of a set $D\ss Y$ is the set
$$
f_-^{-1}(D)=\bigl\{x\in X:f(x)\cap D\ne\0\bigr\}.
$$

\begin{thm}[\cite{BorGelMyshOb}]\label{thm:LowerSemicont}
Let $f\:X\tom Y$ be a set-valued map of topological spaces. Then $f$ is lower semicontinuous if and only if for any open set $U\ss Y$, the set $f_-^{-1}(U)$ is open in $X$.
\end{thm}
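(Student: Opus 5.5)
We prove the two implications separately, working directly from the pointwise definition of lower semicontinuity and from the identity $f_-^{-1}(U)=\{x\in X:f(x)\cap U\ne\0\}$. No earlier result of the paper is needed; the argument is purely topological.

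\emph{Necessity.} Assume $f$ is lower semicontinuous and let $U\ss Y$ be open. We show that $f_-^{-1}(U)$ is open by showing it is a neighborhood of each of its points.
\begin{itemize}
\item Take any $x\in f_-^{-1}(U)$. Then $f(x)\cap U\ne\0$.
\item Lower semicontinuity at $x$, applied to the open set $U$, gives a neighborhood $V^x$ of $x$ with $f(x')\cap U\ne\0$ for every $x'\in V^x$.
\item This says exactly that $V^x\ss f_-^{-1}(U)$.
\end{itemize}
Since $x$ was arbitrary, $f_-^{-1}(U)$ is open.

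\emph{Sufficiency.} Assume $f_-^{-1}(U)$ is open for every open $U\ss Y$. Fix $x\in X$ and an open $U\ss Y$ with $f(x)\cap U\ne\0$.
\begin{itemize}
\item By definition, $x\in f_-^{-1}(U)$.
\item By hypothesis this set is open, so we may take $V^x:=f_-^{-1}(U)$ as a neighborhood of $x$.
\item Every $x'\in V^x$ satisfies $f(x')\cap U\ne\0$.
\end{itemize}
This is the lower semicontinuity condition at $x$, and $x$ was arbitrary.

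The only point requiring care is the meaning of ``neighborhood''. If neighborhoods in the definition need not be open, then in the necessity direction we pass to the interior of $V^x$. That interior still contains $x$ and still lies in $f_-^{-1}(U)$, so the argument is unchanged. We also read the $f(x_0)$ in the paper's definition as $f(x)$, which is evidently intended.
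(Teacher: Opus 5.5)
Your proof is correct. Both directions follow directly from the pointwise definition, and you handle the possibly non-open neighborhoods and the $f(x_0)$ typo properly. The paper gives no proof of this statement; it quotes it from the survey of Borisovich, Gel'man, Myshkis and Obukhovskii, and your direct argument is the standard one.
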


Recall that a map $f\:X\to Y$ of topological spaces is called \emph{open} if the image of every open set is open. In what follows, we will need the following lemma.

\begin{lem}\label{lem:CartesProjOpen}
Let $X$ and $Y$ be topological spaces, and $\pi_X\:X\x Y\to X$ be the standard projection. Then $\pi_X$ is an open map.
\end{lem}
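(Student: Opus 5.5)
The plan is to use the standard basis of the product topology and the fact that taking images commutes with unions. Every open set $W\ss X\x Y$ can be written as a union $W=\cup_{\alpha}(U_\alpha\x V_\alpha)$ of basic open rectangles, where each $U_\alpha$ is open in $X$ and each $V_\alpha$ is open in $Y$. Since images of maps commute with unions, we get
$$
\pi_X(W)=\cup_\alpha\pi_X(U_\alpha\x V_\alpha).
$$
It is therefore enough to prove that the image of a single basic rectangle is open, because a union of open sets is open.

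For a rectangle $U\x V$ we split into two cases. If $V=\0$, then $U\x V=\0$ and $\pi_X(U\x V)=\0$, which is open; rectangles of this kind may simply be discarded from the union. If $V\ne\0$, fix some $y\in V$. For every $x\in U$ the pair $(x,y)$ lies in $U\x V$ and projects to $x$, so $\pi_X(U\x V)=U$. The reverse inclusion $\pi_X(U\x V)\ss U$ is immediate, and $U$ is open. Combining this with the union formula, $\pi_X(W)$ is a union of open sets and is therefore open.

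No step is a real obstacle. The only points needing care are the empty factor case and the justification that every open set is a union of basic rectangles, which is just the definition of the product topology. If one prefers a pointwise argument, the same proof reads as follows: given $x\in\pi_X(W)$, pick $y$ with $(x,y)\in W$ and a basic rectangle $U\x V$ with $(x,y)\in U\x V\ss W$. Then $x\in U\ss\pi_X(W)$, so $\pi_X(W)$ is a neighborhood of each of its points.
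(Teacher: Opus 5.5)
Your proof is correct and complete. The reduction to basic rectangles with $\pi_X(U\x V)=U$ for $V\ne\0$, or equivalently your pointwise version, is the standard argument. The paper states this lemma without proof as a well-known fact, so there is nothing further to compare.
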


\begin{thm}\label{thm:SemiContOpenNeib}
Let $X$ and $Y$ be topological spaces, and $R\in\cR(X,Y)$ be an open subset of $X\x Y$. Then $R\:X\tom Y$ and $R^{-1}\:Y\tom X$ are lower semicontinuous set-valued maps. In other words, $R\in\cR_{ls}(X,Y)$.
\end{thm}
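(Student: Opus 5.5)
The plan is to verify the criterion of Theorem~\ref{thm:LowerSemicont} directly. For any open $U\ss Y$ we will show that the full preimage $R_-^{-1}(U)$ is open in $X$, and we will do the same for $R^{-1}$, with the roles of $X$ and $Y$ swapped.

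The key observation is the identity
$$
R_-^{-1}(U)=\bigl\{x\in X:R(x)\cap U\ne\0\bigr\}=\pi_X\bigl(R\cap(X\x U)\bigr).
$$
Indeed, $x$ belongs to the left-hand side if and only if there is $y\in U$ with $(x,y)\in R$. That is the same as saying $(x,y)\in R\cap(X\x U)$ for some $y$, i.e.\ $x\in\pi_X\bigl(R\cap(X\x U)\bigr)$.

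Now $X\x U$ is open in $X\x Y$ because it is a product of open sets, and $R$ is open by hypothesis. Hence $R\cap(X\x U)$ is open. By Lemma~\ref{lem:CartesProjOpen}, $\pi_X$ is an open map, so $R_-^{-1}(U)$ is open in $X$. Theorem~\ref{thm:LowerSemicont} then gives that $R\:X\tom Y$ is lower semicontinuous. Here we use that $R$ is a correspondence, so that $R(x)\ne\0$ for every $x$ and $R$ really is a set-valued map $X\tom Y$.

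For $R^{-1}$ the argument is symmetric. The subset $R^{-1}\ss Y\x X$ is the image of $R$ under the coordinate-swapping homeomorphism $X\x Y\to Y\x X$, so it is open. It is also a correspondence, since the surjectivity of $R$ means exactly that $R^{-1}(y)\ne\0$ for all $y$, and the surjectivity of $R^{-1}$ means $R(x)\ne\0$ for all $x$. Applying the previous paragraph to $R^{-1}$, now using the projection $\pi_Y$ together with Lemma~\ref{lem:CartesProjOpen}, gives the lower semicontinuity of $R^{-1}$. Therefore $R\in\cR_{ls}(X,Y)$.

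There is no real obstacle. The only point needing care is the set identity for the full preimage, which is immediate from the definitions. If Lemma~\ref{lem:CartesProjOpen} had to be justified as well, it is enough to note that $\pi_X$ maps each basic open set $V\x W$ with $W\ne\0$ onto $V$, and that images commute with unions.
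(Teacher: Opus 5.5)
Your proof is correct and is the paper's argument: the identity $R_-^{-1}(U)=\pi_X\bigl[(X\times U)\cap R\bigr]$, openness of $X\times U$ and of $R$, Lemma~\ref{lem:CartesProjOpen}, and then the criterion of Theorem~\ref{thm:LowerSemicont}. Your treatment of $R^{-1}$ via the coordinate-swap homeomorphism just spells out what the paper calls ``analogous'', and your sketch of why the projection is open is a correct extra.
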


\begin{proof}
We prove lower semicontinuity for $R$ (the proof for $R^{-1}$ is analogous). Choose an arbitrary open $U\ss Y$, then $R_-^{-1}(U)=\pi_X\bigl[(X\x U)\cap R\bigr]$, and since $X\x U$ is open and $R$ is open, $(X\x U)\cap R$ is open. By Lemma~\ref{lem:CartesProjOpen}, $\pi_X\bigl[(X\x U)\cap R\bigr]$ is open, so $R$ is lower semicontinuous by Theorem~\ref{thm:LowerSemicont}.
\end{proof}

\begin{cor}\label{cor:AllLsSemi}
Let $X$ and $Y$ be nonempty metric spaces, then $d_{GH}(X,Y)=d^{ls}_{GH}(X,Y)$.
\end{cor}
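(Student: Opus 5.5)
One inequality is already available: Proposition~\ref{prop:GHleCGH} gives $d_{GH}(X,Y)\le d^{ls}_{GH}(X,Y)$. For the reverse inequality we may assume $d_{GH}(X,Y)<\infty$, since otherwise there is nothing to prove. The idea is to thicken an almost optimal correspondence into an open one, which Theorem~\ref{thm:SemiContOpenNeib} makes lower semicontinuous, and to control the distortion of the thickening with Proposition~\ref{prop:EpsNeigToDis}.

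Fix $\delta>0$. By Theorem~\ref{thm:GH-metri-and-relations} there is $R\in\cR(X,Y)$ with $\dis R<2d_{GH}(X,Y)+\delta$; in particular $\dis R<\infty$. Equip $X\x Y$ with any metric inducing the product topology, for instance the max-metric $|(x,y)(x',y')|=\max\{|xx'|,|yy'|\}$. This is the metric used implicitly in the proofs of Propositions~\ref{prop:ClosedCorresp} and~\ref{prop:EpsNeigToDis}, where closeness of pairs meant closeness in both coordinates. For $\e>0$ set $R_\e=U_\e(R)\ss X\x Y$.

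We check that $R_\e$ is a correspondence. Since $R\ss R_\e$, the projections of $R_\e$ onto $X$ and onto $Y$ contain those of $R$, which are all of $X$ and all of $Y$. Hence $R_\e\in\cR(X,Y)$.

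The set $R_\e$ is an open $\e$-neighborhood in the metric space $X\x Y$, so it is open in the product topology. By Theorem~\ref{thm:SemiContOpenNeib}, $R_\e\in\cR_{ls}(X,Y)$. Proposition~\ref{prop:EpsNeigToDis} gives $\dis R_\e\to\dis R$ as $\e\to0$, so we can choose $\e$ with $\dis R_\e<\dis R+\delta$. Then
$$
2d^{ls}_{GH}(X,Y)\le\dis R_\e<2d_{GH}(X,Y)+2\delta,
$$
and letting $\delta\to0$ proves the claim.

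The only delicate point is that Proposition~\ref{prop:EpsNeigToDis} must refer to the max-metric on $X\x Y$, or to some metric whose $\e$-neighborhoods have both coordinates within $\e$. If a different product metric were intended, such as the sum metric, the estimate $\bigl||xx'|-|yy'|\bigr|<\bigl||x_1x'_1|-|y_1y'_1|\bigr|+4\e$ still holds, so the argument is unchanged.
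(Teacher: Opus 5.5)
Your proof is correct and follows the paper's argument. Like the paper, you thicken a correspondence to its open neighborhood $U_\e(R)$, get lower semicontinuity from Theorem~\ref{thm:SemiContOpenNeib} and control the distortion with Proposition~\ref{prop:EpsNeigToDis}, and then combine this with Proposition~\ref{prop:GHleCGH}; your explicit $\delta$-bookkeeping, the check that $U_\e(R)$ is still a correspondence, and the remark on the product metric just make explicit details the paper leaves implicit.
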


\begin{proof}
By Theorem~\ref{thm:SemiContOpenNeib}, for each $R\in\cR(X,Y)$, $\e>0$ and $R_\e:=U_\e(R)$ we have $R_\e\in\cR_{ls}(X,Y)$. On the other hand, by Proposition~\ref{prop:EpsNeigToDis}, $\inf_\e\dis R_\e=\dis R$, so
$$
2d_{GH}(X,Y)=\inf_{R\in\cR(X,Y)}\dis R=\inf_{R\in\cR(X,Y),\,\e>0}\dis R_\e\ge\inf_{R\in\cR_{ls}(X,Y)}\dis R=2d^{ls}_{GH}(X,Y).
$$
It remains to apply Proposition~\ref{prop:GHleCGH}.
\end{proof}

We now consider the case of the upper semicontinuous distance.

\subsection{Upper semicontinuous $GH$-distance}

Let $f\:X\tom Y$ be a set-valued map of sets. The \emph{small preimage} of a set $D\ss Y$ is the set
$$
f_+^{-1}(D)=\bigl\{x\in X:f(x)\ss D\bigr\}.
$$

\begin{thm}[\cite{BorGelMyshOb}]\label{thm:UpperSemicont}
Let $f\:X\tom Y$ be a set-valued map of topological spaces. Then $f$ is upper semicontinuous if and only if for any closed set $W\ss Y$, the set $f_-^{-1}(W)$ is closed in $X$.
\end{thm}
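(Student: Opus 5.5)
The plan is to go through the small preimage $f_+^{-1}$ just introduced, and then pass to complements. Note that the statement is about the full preimage $f_-^{-1}(W)$ of a closed set $W$. The bridge between the two preimages is the set-theoretic identity
$$
X\sm f_+^{-1}(Y\sm W)=\bigl\{x\in X:f(x)\not\ss Y\sm W\bigr\}=\bigl\{x\in X:f(x)\cap W\ne\0\bigr\}=f_-^{-1}(W),
$$
valid for every $W\ss Y$. As $W$ runs over the closed subsets of $Y$, the set $U:=Y\sm W$ runs over the open subsets. So the theorem is equivalent to the following claim: $f$ is upper semicontinuous if and only if $f_+^{-1}(U)$ is open in $X$ for every open $U\ss Y$. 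I will prove this claim in two directions and then take complements.

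\emph{Upper semicontinuity implies that small preimages of open sets are open.} Let $U\ss Y$ be open and $x\in f_+^{-1}(U)$, so $f(x)\ss U$. Then $U$ is a neighborhood of $f(x)$. Upper semicontinuity at $x$ gives a neighborhood $V^x$ with $f(V^x)\ss U$, that is, $f(x')\ss U$ for all $x'\in V^x$. Hence $V^x\ss f_+^{-1}(U)$, so every point of $f_+^{-1}(U)$ is interior and the set is open.

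\emph{Openness of small preimages implies upper semicontinuity.} Fix $x\in X$ and a neighborhood $U^{f(x)}$ of $f(x)$. The one point needing care is what ``neighborhood of a set'' means. I take it to mean a set containing an open set $O$ with $f(x)\ss O\ss U^{f(x)}$, which is the convention used in \cite{BorGelMyshOb} and implicitly in the proof of Proposition~\ref{prop:ContComposMult}. Then $x\in f_+^{-1}(O)$, and this set is open by hypothesis, so it is a neighborhood $V^x$ of $x$. For $x'\in V^x$ we have $f(x')\ss O\ss U^{f(x)}$, which is exactly upper semicontinuity at $x$.

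Combining the claim with the complement identity finishes the proof. $f$ is upper semicontinuous iff every $f_+^{-1}(U)$ with $U$ open is open, iff every $X\sm f_+^{-1}(Y\sm W)=f_-^{-1}(W)$ with $W$ closed is closed. No step is a real obstacle. The only points to state carefully are the convention for neighborhoods of sets and the duality $f(x)\not\ss Y\sm W\iff f(x)\cap W\ne\0$, which is where the full preimage in the statement comes from.
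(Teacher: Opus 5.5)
Your proof is correct. The identity $X\setminus f_+^{-1}(Y\setminus W)=f_-^{-1}(W)$ reduces the statement to the criterion ``small preimages of open sets are open'', and you argue both directions of that criterion properly, including the care about neighborhoods of a set. The paper gives no proof of its own here and only cites \cite{BorGelMyshOb}; your argument is the standard proof of this criterion, so there is nothing to add.
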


\begin{thm}\label{thm:CompHausd}
Let $X$ and $Y$ be nonempty compact topological spaces, with $X$ also Hausdorff, and let $f\:X\tom Y$ be a set-valued map whose graph $f\ss X\x Y$ is closed. Then $f$ is upper semicontinuous.
\end{thm}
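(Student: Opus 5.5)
The plan is to use the characterization of upper semicontinuity in Theorem~\ref{thm:UpperSemicont}: it suffices to show that for every closed set $W\ss Y$ the full preimage $f_-^{-1}(W)=\{x\in X:f(x)\cap W\ne\0\}$ is closed in $X$. This mirrors the proof of Theorem~\ref{thm:SemiContOpenNeib}, with open sets replaced by closed ones and openness of the projection replaced by a compactness argument.

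First, I will express the full preimage via the projection $\pi_X\:X\x Y\to X$, exactly as for open sets in Theorem~\ref{thm:SemiContOpenNeib}. Identifying $f$ with its graph, we have
$$
f_-^{-1}(W)=\pi_X\bigl[f\cap(X\x W)\bigr].
$$
Indeed, $x$ lies in the left-hand side iff there is $y\in W$ with $(x,y)\in f$.

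Next, I will show this set is closed by a compactness chain.
\begin{enumerate}
\item $X\x W$ is closed in $X\x Y$, and $f$ is closed by hypothesis, so $K:=f\cap(X\x W)$ is closed in $X\x Y$.
\item $X\x Y$ is compact as a product of two compact spaces (Tychonoff for finitely many factors, which is elementary via the tube lemma).
\item Hence $K$, being a closed subset of a compact space, is compact.
\item The projection $\pi_X$ is continuous, so $\pi_X(K)$ is compact in $X$.
\item Since $X$ is Hausdorff, the compact set $\pi_X(K)$ is closed in $X$.
\end{enumerate}
This gives closedness of $f_-^{-1}(W)$ for every closed $W$, and Theorem~\ref{thm:UpperSemicont} then yields upper semicontinuity of $f$.

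There is no serious obstacle. The only points needing care are the identity for the full preimage and the compactness of $X\x Y$; the latter I will justify by citing the finite Tychonoff theorem. I will also note where each hypothesis enters: compactness of $Y$ is needed for the compactness of $X\x Y$ (otherwise a projection need not be closed), and the Hausdorff property of $X$ is needed to turn compactness of $\pi_X(K)$ into closedness.
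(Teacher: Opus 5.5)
Your proposal is correct and matches the paper's proof essentially step for step. The paper also writes $f_-^{-1}(W)=\pi_X\bigl[(X\x W)\cap f\bigr]$ for closed $W\ss Y$, notes that this intersection is closed and hence compact in the compact space $X\x Y$, and argues that its image under the continuous projection is compact and therefore closed because $X$ is Hausdorff; it then concludes via Theorem~\ref{thm:UpperSemicont}.
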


\begin{proof}
Choose an arbitrary closed $W\ss Y$ and denote by $\pi_X\:X\x Y\to X$ the standard projection. Then
$$
f_-^{-1}(W)=\bigl\{x\in X:f(x)\cap W\ne\0\bigr\}=\bigl\{x\in X:\exists y\in W,\,y\in f(x)\bigr\}=\pi_X\bigl[(X\x W)\cap f\bigr].
$$
Since $X\x W$ is closed in $X\x Y$, $(X\x W)\cap f$ is also closed, and hence compact due to the compactness of $X\x Y$. Since the projection $\pi_X$ is continuous, and the continuous image of a compact set is compact, the set $f_-^{-1}(W)\ss X$ is compact and, therefore, closed because $X$ is Hausdorff. Thus, by Theorem~\ref{thm:UpperSemicont}, the set-valued map $f$ is upper semicontinuous.
\end{proof}

From Theorem~\ref{thm:CompHausd} we immediately obtain the following result.

\begin{cor}\label{cor:ClosedCorrespUpSemi}
Let $X$ and $Y$ be nonempty compact Hausdorff topological spaces. Then every closed correspondence $R\in\cR(X,Y)$, together with its inverse, is upper semicontinuous. In other words, $\bcR(X,Y)\ss\cR_{us}(X,Y)$.
\end{cor}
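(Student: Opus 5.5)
The statement follows by applying Theorem~\ref{thm:CompHausd} twice, once to $R$ and once to $R^{-1}$. The only things to check are that the hypotheses hold in both directions.

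First, fix a closed correspondence $R\in\bcR(X,Y)$. Since $R$ is a correspondence, it is a surjective set-valued map $R\:X\tom Y$, so $R(x)\ne\0$ for each $x\in X$. Its graph is exactly the subset $R\ss X\x Y$, which is closed by assumption. The space $X$ is compact Hausdorff and $Y$ is compact, so all hypotheses of Theorem~\ref{thm:CompHausd} are satisfied. Hence $R$ is upper semicontinuous.

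Second, consider the inverse $R^{-1}\ss Y\x X$. It is again a correspondence: $R^{-1}(y)\ne\0$ for every $y$ because $R$ is surjective, and $R^{-1}$ is surjective onto $X$ because every $R(x)$ is nonempty. Its graph is the image of $R$ under the coordinate swap $X\x Y\to Y\x X$, $(x,y)\mapsto(y,x)$. This map is a homeomorphism, so $R^{-1}$ is closed in $Y\x X$. Now $Y$ is compact Hausdorff (this is where the Hausdorff assumption on $Y$ is used) and $X$ is compact. Applying Theorem~\ref{thm:CompHausd} with the roles of $X$ and $Y$ exchanged shows that $R^{-1}$ is upper semicontinuous.

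Therefore $R\in\cR_{us}(X,Y)$, which gives $\bcR(X,Y)\ss\cR_{us}(X,Y)$.

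There is no real obstacle. The only points needing care are that the inverse of a correspondence is a correspondence with closed graph, and that the Hausdorff property is required of both spaces because Theorem~\ref{thm:CompHausd} is applied in both directions.
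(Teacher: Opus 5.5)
Your proposal is correct and is the argument the paper intends: the paper states the corollary as an immediate consequence of Theorem~\ref{thm:CompHausd}, applied once to $R$ and once to $R^{-1}$, whose graph is closed because it is the image of $R$ under the coordinate-swap homeomorphism. You also correctly note that the Hausdorff property of $Y$ is needed for the second application, which is why both spaces are assumed Hausdorff.
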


\begin{cor}\label{cor:CompactUpSemi}
Let $X$ and $Y$ be nonempty compact metric spaces. Then
$$
d_{GH}^{us}(X,Y)=d_{GH}(X,Y).
$$
\end{cor}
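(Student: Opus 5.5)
The argument is a direct combination of Corollary~\ref{cor:ClosedCorresp}, Corollary~\ref{cor:ClosedCorrespUpSemi} and Proposition~\ref{prop:GHleCGH}; the idea is to squeeze $d^{us}_{GH}$ between two copies of $d_{GH}$.

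First, I will check that the hypotheses of Corollary~\ref{cor:ClosedCorrespUpSemi} hold. A compact metric space is a compact Hausdorff topological space, so for nonempty compact metric spaces $X$ and $Y$ that corollary gives $\bcR(X,Y)\ss\cR_{us}(X,Y)$. One point deserves care. The definition of $\cR_{us}$ requires both $R$ and $R^{-1}$ to be upper semicontinuous. The inverse of a closed correspondence is closed in $Y\x X$, since the swap map $X\x Y\to Y\x X$ is a homeomorphism. So Theorem~\ref{thm:CompHausd} also applies to $R^{-1}\:Y\tom X$, now using that $Y$ is compact and Hausdorff. This is exactly the content of the phrase ``together with its inverse'' in that corollary.

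Next, taking the infimum of $\dis$ over the larger family can only decrease it. Using Corollary~\ref{cor:ClosedCorresp},
$$
2d^{us}_{GH}(X,Y)=\inf_{R\in\cR_{us}(X,Y)}\dis R\le\inf_{R\in\bcR(X,Y)}\dis R=2d_{GH}(X,Y).
$$
Here $\bcR(X,Y)$ is nonempty, for instance because $X\x Y$ is itself a closed correspondence, so the right-hand infimum is over a nonempty set.

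Finally, Proposition~\ref{prop:GHleCGH} gives the reverse inequality $d_{GH}(X,Y)\le d^{us}_{GH}(X,Y)$, and together the two inequalities yield equality.

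None of these steps presents a real obstacle. The only things to verify are that the closedness of $R^{-1}$ and the compactness and Hausdorff property of both spaces are really available, so that Corollary~\ref{cor:ClosedCorrespUpSemi} covers the inverse as well.
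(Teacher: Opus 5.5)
Your proposal is correct and follows the paper's proof. The paper also gets $d^{us}_{GH}(X,Y)\le d_{GH}(X,Y)$ from Corollary~\ref{cor:ClosedCorrespUpSemi} and the reverse inequality from Proposition~\ref{prop:GHleCGH}. You merely make explicit two points the paper leaves tacit: the appeal to Corollary~\ref{cor:ClosedCorresp}, and the check that $R^{-1}$ is closed so that Theorem~\ref{thm:CompHausd} applies to it.
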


\begin{proof}
By Corollary~\ref{cor:ClosedCorrespUpSemi}, we have $d_{GH}(X,Y)\ge d^{us}_{GH}(X,Y)$. The opposite inequality is contained in Proposition~\ref{prop:GHleCGH}.
\end{proof}

We will need the following lemma.

\begin{lem}\label{lem:ClosedInCartOfSubsets}
Let $A\ss X$ and $B\ss Y$ be nonempty subsets of metric spaces, and $R\ss A\x B$ be closed in $A\x B$. Let $(a_n,b_n)\in R$ be a sequence such that $a_n\to a\in A$ and $b_n\to b\in B$. Then $(a,b)\in R$.
\end{lem}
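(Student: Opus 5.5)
The argument is the standard sequential characterization of closed sets in a metric space, applied in the subspace $A\x B$. I will equip $X\x Y$ with a metric inducing the product topology, for instance $\bigl|(x,y)(x',y')\bigr|=\max\bigl\{|xx'|,|yy'|\bigr\}$. Its restriction to $A\x B$ induces the product of the subspace topologies of $A$ and $B$, which is also the subspace topology of $A\x B$ in $X\x Y$. Hence ``closed in $A\x B$'' means closed in the metric space $A\x B$ with this restricted metric.

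First, I will observe that $(a_n,b_n)\to(a,b)$ in $A\x B$. The distance between them is $\max\bigl\{|a_na|,|b_nb|\bigr\}\to0$, and the limit point $(a,b)$ lies in $A\x B$ because $a\in A$ and $b\in B$ by hypothesis. Second, I will use that in a metric space a closed set contains the limits of all its convergent sequences. Concretely, suppose $(a,b)\notin R$. Since $R$ is closed in $A\x B$, there is $r>0$ such that the open ball $U_r\bigl((a,b)\bigr)$ in $A\x B$ misses $R$. But $(a_n,b_n)$ lies in this ball for large $n$, which contradicts $(a_n,b_n)\in R$. Therefore $(a,b)\in R$.

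The only point needing care is that the limit $(a,b)$ must belong to the ambient space $A\x B$ in which $R$ is closed. Otherwise closedness in $A\x B$ would say nothing about it, since $R$ need not be closed in $X\x Y$. This is exactly why the hypotheses $a\in A$ and $b\in B$ are required, and with them the argument above is complete.
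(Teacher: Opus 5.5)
Your proof is correct and follows essentially the same argument as the paper. The paper argues by contradiction with the product neighborhood $\bigl(U_\e(a)\cap A\bigr)\x\bigl(U_\e(b)\cap B\bigr)$, which is exactly the open ball of radius $\e$ around $(a,b)$ in $A\x B$ for your max-metric.
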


\begin{proof}
Suppose the contrary, i.e., $(a,b)\not\in R$. Since $R$ is closed in $A\x B$, there exists $\e>0$ such that $\bigl(U_\e(a)\cap A\bigr)\x\bigl(U_\e(b)\cap B\bigr)$ does not intersect $R$. But for sufficiently large $n$, we have $a_n\in U_\e(a)\cap A$ and $b_n\in U_\e(b)\cap B$, a contradiction.
\end{proof}

\begin{thm}\label{thm:SubsertOfCompactMetrSp}
Let $Y\ss X$ be a nonempty subset of a compact metric space $X$. Then $d^{us}_{GH}(X,Y)=d_{GH}(X,Y)$.
\end{thm}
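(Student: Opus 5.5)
The plan is to build, for every $\e>0$, a correspondence $R\in\cR_{us}(X,Y)$ with $\dis R\le2d_{GH}(X,Y)+C\e$ for some absolute constant $C$. Since $d^{us}_{GH}\ge d_{GH}$ by Proposition~\ref{prop:GHleCGH}, this gives the claim. The correspondence will be a \emph{finite} union of products of closed sets, because such unions are automatically upper semicontinuous in both directions.

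\textbf{Step 1 (a good finite set of pairs).} Let $\bY$ be the closure of $Y$ in $X$; it is compact and $d_H(Y,\bY)=0$, so $d_{GH}(X,\bY)=d_{GH}(X,Y)$ by the triangle inequality (Theorem~\ref{thm:AllGenPseudometrics} for $d_{GH}$). By Theorem~\ref{thm:GH-metri-and-relations}, take $R_0\in\cR(X,\bY)$ with $\dis R_0\le2d_{GH}(X,Y)+\e$. Choose finite $\e$-nets $\{x_i\}\ss X$ and $\{z_j\}\ss\bY$, which exist by compactness. For each $i$ pick $y_i\in R_0(x_i)$, and for each $j$ pick $x'_j$ with $(x'_j,z_j)\in R_0$. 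Let $P\ss R_0$ be the finite set of all these pairs. Then $\dis P\le\dis R_0$ by Proposition~\ref{prop:DisSubset}, the first coordinates of $P$ form an $\e$-net of $X$, and the second coordinates form an $\e$-net of $\bY$.

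\textbf{Step 2 (the correspondence).} Put
$$
R=\bigcup_{(p,q)\in P}\Bigl(B_\e(p)\x\bigl(B_\e(q)\cap Y\bigr)\Bigr)\ss X\x Y.
$$
Each set $B_\e(q)\cap Y$ is nonempty because $q\in\bY$ and $Y$ is dense in $\bY$. The balls $B_\e(p)$ cover $X$, and the sets $B_\e(q)\cap Y$ cover $Y$, since the $q$'s form an $\e$-net of $\bY\sp Y$. Hence $R\in\cR(X,Y)$. For $(x,y),(x',y')\in R$ lying in the blocks of $(p,q),(p',q')\in P$, the triangle inequality gives
$$
\bigl||xx'|-|yy'|\bigr|\le\bigl||pp'|-|qq'|\bigr|+4\e,
$$
so $\dis R\le\dis P+4\e\le2d_{GH}(X,Y)+5\e$.

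\textbf{Step 3 (upper semicontinuity).} This is the step that needs care, since $Y$ need not be closed and Corollary~\ref{cor:ClosedCorrespUpSemi} does not apply directly. The finiteness avoids the difficulty. Fix $x\in X$ and let $I$ be the set of pairs in $P$ whose closed ball $B_\e(p)$ contains $x$. The union of the finitely many closed balls $B_\e(p)$ with $(p,q)\notin I$ is closed and misses $x$, so some neighborhood $V$ of $x$ avoids it. For every $x'\in V$ we then have $R(x')\ss\bigcup_{I}\bigl(B_\e(q)\cap Y\bigr)=R(x)$. Thus any neighborhood of $R(x)$ contains $R(V)$, and $R$ is upper semicontinuous at $x$. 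The same argument applies to $R^{-1}$: the sets $B_\e(q)\cap Y$ are closed in $Y$ and finite in number, so $R^{-1}(y')\ss R^{-1}(y)$ for $y'$ near $y$. Hence $R\in\cR_{us}(X,Y)$, and letting $\e\to0$ finishes the proof.
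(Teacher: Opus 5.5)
Your proof is correct, and it takes a genuinely different route from the paper's.

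The paper never builds a special correspondence. It tries to show that every closed $R'\in\bcR(Y,X)$ already lies in $\cR_{us}(Y,X)$. It extends $R'$ over the compact closure $\bY$, takes the closure $\bR$ in $\bY\x X$, applies the closed-graph criterion (Corollary~\ref{cor:ClosedCorrespUpSemi}), restricts back to $Y$ (Proposition~\ref{prop:RestrictSemicont}), and concludes via Corollary~\ref{cor:ClosedCorresp}.

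That argument only handles the direction $R'\:Y\tom X$. The inverse $R'^{-1}\:X\tom Y$ is obtained from $\bR^{-1}$ by cutting its values down to $Y$, and this does not preserve upper semicontinuity. In fact the claimed inclusion fails in general. Take $X=[0,1]$, $Y=(0,1]$ and $R'=\{(t,t):0<t\le1\}\cup\{(1,0)\}$. This set is closed in $Y\x X$. But $R'^{-1}(0)=\{1\}$ while $R'^{-1}(x')=\{x'\}$ for small $x'>0$, so $R'^{-1}$ is not upper semicontinuous at $0$.

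Your finite union of closed rectangles avoids this issue entirely. Near every point you have $R(x')\ss R(x)$ and $R^{-1}(y')\ss R^{-1}(y)$, so both directions follow from the same one-line argument. The only price is the simple net-and-distortion estimate of Step~2.

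Your approach is also more general, because it only uses finite $\e$-nets. If you take nets of $Y$ itself, with no need for $\bY$, the same proof gives Corollary~\ref{cor:TotalBound} for any totally bounded $X,Y$ without completions.

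It appears to go further still. Replace the nets by locally finite closed covers $\{F_\alpha\}$ of $X$ and $\{G_\beta\}$ of $Y$ of mesh at most $\e$. Such covers exist: take closures of a locally finite refinement of the cover by $\e/2$-balls, which exists by Stone's paracompactness theorem. Let $R$ be the union of those $F_\alpha\x G_\beta$ that meet $R_0$. Your argument then seems to give $d^{us}_{GH}=d_{GH}$ for arbitrary metric spaces.
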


\begin{proof}
For arbitrary $R'\in\bcR(Y,X)$ and $\e>0$, extend $R'$ to $R\in\cR(\bY,X)$ as follows: for each point $y'\in\bY\sm Y$,
\begin{itemize}
\item consider an arbitrary sequence $y'_n\in Y$, $y'_n\to y'$;
\item for each $n$, choose $x'_n\in X$ such that $(y'_n,x'_n)\in R'$;
\item using the compactness of $X$, choose a subsequence from $x'_n$ converging to some $x'\in X$;
\item without loss of generality, assume that $x'_n\to x'$;
\item add all such obtained $(y',x')$ to $R'$.
\end{itemize}

Next, let $\bR\ss\bY\x X$ be the closure of $R$ in $\bY\x X$. By Corollary~\ref{cor:ClosedCorrespUpSemi}, the set-valued map $\bR\:\bY\tom X$ is upper semicontinuous. By Proposition~\ref{prop:RestrictSemicont}, the restriction $\bR|_Y$ is also upper semicontinuous.

We show that $\bR|_Y=R'$. Choose an arbitrary point $(y,x)\in\bR|_Y$, then there exists a sequence $(y_n,x_n)\in R$ converging to $(y,x)$. By construction, for each $y_n$ there are two possibilities:
\begin{itemize}
\item either $y_n\in Y$, then we remain it unchanged;
\item or $y_n\in\bY\sm Y$, but then, by construction, there exists $y'_n\in Y$ such that $(y'_n,x'_n)\in R'$ and $|y'_ny_n|<1/n$; in this case, we replace the pair $(y_n,x_n)$ with $(y'_n,x'_n)$.
\end{itemize}
Note that for the reconstructed sequence $(y_n,x_n)$ we have $(y_n,x_n)\in R'$ for all $n$, $y_n\to y$, $x_n\to x$. Since $y\in Y$ and $R'$ is closed, by Lemma~\ref{lem:ClosedInCartOfSubsets}, $(y,x)\in R'$.

Thus, we have shown that $\bcR(Y,X)\ss\cR_{us}(Y,X)$, which completes the proof.
\end{proof}

From Theorem~\ref{thm:SubsertOfCompactMetrSp} we immediately obtain the following result.

\begin{cor}\label{cor:DenseSubset}
Let $Y\ss X$ be a dense subset of a nonempty compact metric space. Then $d_{GH}^{us}(X,Y)=0$.
\end{cor}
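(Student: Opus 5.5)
The plan is to combine Theorem~\ref{thm:SubsertOfCompactMetrSp} with the classical fact that a dense subset is at Gromov--Hausdorff distance zero from the whole space. By Theorem~\ref{thm:SubsertOfCompactMetrSp}, since $Y\ss X$ is a nonempty subset of the compact metric space $X$, we have $d^{us}_{GH}(X,Y)=d_{GH}(X,Y)$. It therefore suffices to show that $d_{GH}(X,Y)=0$. (Here $Y$ is nonempty because $X$ is nonempty and $Y$ is dense in it.)

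To show $d_{GH}(X,Y)=0$, I would use Definition~\ref{dfn:GH} directly. Take $Z:=X$ and the isometric embeddings given by the identity of $X$ and the inclusion $Y\hookrightarrow X$. Density of $Y$ means that $|xY|=0$ for every $x\in X$. Also $|Xy|=0$ for every $y\in Y$, since $y\in X$. Hence $d_H(X,Y)=0$, and so $d_{GH}(X,Y)\le d_H(X,Y)=0$.

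Alternatively, one can argue via Theorem~\ref{thm:GH-metri-and-relations}. For $\e>0$, take the correspondence $R=\{(x,y)\in X\x Y:|xy|<\e\}$. It is surjective onto both factors: onto $X$ by density of $Y$, and onto $Y$ because each $y\in Y$ pairs with $x=y$. The triangle inequality gives $\dis R\le2\e$, so again $d_{GH}(X,Y)\le\e$ for every $\e>0$.

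Combining the two steps gives $d^{us}_{GH}(X,Y)=d_{GH}(X,Y)=0$. There is no real obstacle here. The only thing to check is that the hypotheses of Theorem~\ref{thm:SubsertOfCompactMetrSp} are met, namely that $Y$ is nonempty and $X$ is compact, and both are given.
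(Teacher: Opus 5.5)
Your proof is the paper's proof: the corollary follows from Theorem~\ref{thm:SubsertOfCompactMetrSp} together with $d_{GH}(X,Y)=0$ for dense $Y$, and both of your verifications of the latter are correct. Be aware, though, that the proof of that theorem in the paper checks upper semicontinuity only of $R'\colon Y\tom X$ and never of $(R')^{-1}\colon X\tom Y$, and the latter can fail for a closed $R'$. For example, take $X=[0,1]$, $Y=\mathbb{Q}\cap[0,1]$, $R'=\{(y,x):|xy|\le\e\}$, at a point $x$ with $x+\e<1$ irrational; so a self-contained argument is worth having: take a finite $\e$-net $y_1,\dots,y_n\in Y$ of $X$ and $R=\bigcup_i B_\e(y_i)\x\bigl(B_\e(y_i)\cap Y\bigr)$, for which $\dis R\le4\e$, and, being a finite union of products of closed sets, $R$ satisfies $R(x')\ss R(x)$ and $R^{-1}(y')\ss R^{-1}(y)$ for all $x'$ near $x$ and $y'$ near $y$, so $R\in\cR_{us}(X,Y)$ and $d^{us}_{GH}(X,Y)\le2\e$.
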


\begin{cor}\label{cor:TotalBound}
Let $X$ and $Y$ be totally bounded metric spaces. Then $d^{us}_{GH}(X,Y)=d_{GH}(X,Y)$.
\end{cor}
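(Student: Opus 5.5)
The plan is to pass to completions and use the triangle inequality for $d^{us}_{GH}$ (Theorem~\ref{thm:AllGenPseudometrics}). Let $\bX$ and $\bY$ be the metric completions of $X$ and $Y$. Since $X$ and $Y$ are totally bounded, $\bX$ and $\bY$ are compact metric spaces. Moreover, $X$ is a dense subset of $\bX$ and $Y$ is a dense subset of $\bY$.

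The first step applies Corollary~\ref{cor:DenseSubset}, which gives $d^{us}_{GH}(\bX,X)=0$ and $d^{us}_{GH}(\bY,Y)=0$. The second step applies Corollary~\ref{cor:CompactUpSemi} to the compact spaces $\bX$ and $\bY$, which gives $d^{us}_{GH}(\bX,\bY)=d_{GH}(\bX,\bY)$. The third step notes that $d_{GH}(\bX,X)=0=d_{GH}(\bY,Y)$ for the classical distance; this also follows from Proposition~\ref{prop:GHleCGH} and the first step. Since $d_{GH}$ is a pseudometric, we get $d_{GH}(\bX,\bY)=d_{GH}(X,Y)$.

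Combining these with the triangle inequality for $d^{us}_{GH}$ yields
$$
d^{us}_{GH}(X,Y)\le d^{us}_{GH}(X,\bX)+d^{us}_{GH}(\bX,\bY)+d^{us}_{GH}(\bY,Y)=d_{GH}(\bX,\bY)=d_{GH}(X,Y).
$$
The reverse inequality $d_{GH}(X,Y)\le d^{us}_{GH}(X,Y)$ is Proposition~\ref{prop:GHleCGH}, and the two together give the claim.

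I do not expect a substantial obstacle here. The only point needing care is that Corollary~\ref{cor:DenseSubset} is used with the roles (compact space, dense subset) $=(\bX,X)$. The symmetry of $d^{us}_{GH}$ follows because $R\in\cR_{us}(X,Y)$ if and only if $R^{-1}\in\cR_{us}(Y,X)$ and $\dis R^{-1}=\dis R$. With symmetry, the order of arguments in the triangle inequality is harmless. The remaining standard facts are that the completion of a totally bounded space is compact, and that $X$ embeds isometrically and densely in $\bX$. The latter lets us identify $X$ with a subset of $\bX$, as Theorem~\ref{thm:SubsertOfCompactMetrSp} requires.
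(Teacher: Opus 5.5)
Your proposal is correct and is the paper's own argument: pass to the compact completions $\bX,\bY$, use Corollary~\ref{cor:DenseSubset} for $d^{us}_{GH}(\bX,X)=d^{us}_{GH}(\bY,Y)=0$ and Corollary~\ref{cor:CompactUpSemi} for $d^{us}_{GH}(\bX,\bY)=d_{GH}(\bX,\bY)=d_{GH}(X,Y)$, then conclude with the triangle inequality and Proposition~\ref{prop:GHleCGH}. One caution about the result you cite, not about your reasoning: the paper's proof of Theorem~\ref{thm:SubsertOfCompactMetrSp}, on which Corollary~\ref{cor:DenseSubset} rests, checks upper semicontinuity of $R'$ but not of $(R')^{-1}$, and the latter can fail for closed correspondences (take $X=[0,2]$, $Y=X\sm\{1\}$, $R'=\{(y,y):y\in Y\}\cup\{(0,1)\}$); Corollary~\ref{cor:DenseSubset} is nevertheless true, e.g.\ via $R=\bigcup_i C_i\x(C_i\cap Y)$ for a finite cover of $X$ by closed balls $C_i$ of radius $\delta$ centred at points of $Y$, since the full preimage of any closed set under $R$ (resp.\ $R^{-1}$) is a finite union of sets $C_i$ (resp.\ $C_i\cap Y$), so both maps are upper semicontinuous by Theorem~\ref{thm:UpperSemicont}, while $\dis R\le4\delta$.
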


\begin{proof}
Let $\bX$ and $\bY$ be the completions of $X$ and $Y$, respectively. Then $\bX$ and $\bY$ are compact, and $X$ and $Y$ are their dense subsets. It remains to use Theorem~\ref{cor:CompactUpSemi}, Corollary~\ref{cor:DenseSubset} and the triangle inequality for $d^{us}_{GH}$.
\end{proof}

\begin{cor}
Let $X$ and $Y$ be boundedly compact metric spaces\/ \(i.e., all closed balls in them are compact\/\) with $d_{GH}(X,Y)<\infty$. Then $d^{us}_{GH}(X,Y)=d_{GH}(X,Y)$.
\end{cor}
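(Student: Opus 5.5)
The plan is to show that every closed correspondence of finite distortion between boundedly compact spaces already lies in $\cR_{us}(X,Y)$. The equality then follows from Corollary~\ref{cor:ClosedCorresp} and Proposition~\ref{prop:GHleCGH}.

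Since $d_{GH}(X,Y)<\infty$, Theorem~\ref{thm:GH-metri-and-relations} lets us restrict the infimum to correspondences $R$ with $\dis R<\infty$. By Proposition~\ref{prop:ClosedCorresp}, the closure $\bR$ of such an $R$ has the same distortion. Also $\bR\sp R$, so $\bR$ is still a correspondence. Therefore
$$
2d_{GH}(X,Y)=\inf\bigl\{\dis R:R\in\bcR(X,Y),\,\dis R<\infty\bigr\}.
$$
It then suffices to prove that every $R\in\bcR(X,Y)$ with $\dis R<\infty$ has $R$ and $R^{-1}$ upper semicontinuous. This gives $d^{us}_{GH}\le d_{GH}$, and Proposition~\ref{prop:GHleCGH} supplies the reverse inequality.

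To prove upper semicontinuity of $R$, I would use the criterion of Theorem~\ref{thm:UpperSemicont}: for every closed $W\ss Y$ the set $R_-^{-1}(W)$ must be closed in $X$. Take $x_n\in R_-^{-1}(W)$ with $x_n\to x$, and pick $y_n\in R(x_n)\cap W$.

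The key step, which replaces the compactness of Theorem~\ref{thm:CompHausd}, is to show that $(y_n)$ is bounded. For all $n$ we have
$$
|y_1y_n|\le|x_1x_n|+\dis R,
$$
and the right-hand side is bounded because $(x_n)$ converges. Hence all $y_n$ lie in a closed ball $B_s(y_1)$, which is compact since $Y$ is boundedly compact. Passing to a subsequence, $y_n\to y$, and $y\in W$ because $W$ is closed. Since $R$ is closed and $(x_n,y_n)\to(x,y)$, we get $(x,y)\in R$, so $x\in R_-^{-1}(W)$.

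The same argument with the roles of $X$ and $Y$ swapped, using that $X$ is boundedly compact and $\dis R^{-1}=\dis R$, gives upper semicontinuity of $R^{-1}$.

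The main obstacle is this boundedness step. It is the only place where both hypotheses are genuinely needed: finiteness of the distortion (which is why we first reduce to $d_{GH}<\infty$ and $\dis R<\infty$) and bounded compactness (to extract a convergent subsequence). Once that step is in place, the rest is a direct adaptation of the proof of Theorem~\ref{thm:CompHausd}.
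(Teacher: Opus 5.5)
Your proof is correct, but the key step is carried out differently from the paper's.

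\textbf{Paper's route.} The paper fixes $x$ and uses $\dis R<\infty$ to confine $R$ over the compact ball $A=B_r(x)$ to a compact ball $B=B_s(y)$. It then applies Theorem~\ref{thm:CompHausd} to the restricted map $R|_A\:A\tom B$. Finally it transfers this local upper semicontinuity back to the whole spaces by intersecting with $V=U_s(y)$ and $U_r(x)$.

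\textbf{Your route.} You verify the closed-full-preimage criterion of Theorem~\ref{thm:UpperSemicont} globally, by a sequential argument. The estimate $|y_1y_n|\le|x_1x_n|+\dis R$ traps the $y_n$ in a compact ball.

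\textbf{Comparison.} Your argument avoids the localize-and-glue step. It also shows slightly more: upper semicontinuity of a closed finite-distortion $R$ needs only $Y$ to be boundedly compact, and that of $R^{-1}$ needs only $X$. The paper's route through Theorem~\ref{thm:CompHausd} uses compactness on both sides for each map, though in exchange it simply reuses an already proved result. You also make explicit two points the paper leaves implicit: the reduction to closed correspondences of finite distortion, and the treatment of $R^{-1}$.

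As a side remark, the hypothesis $d_{GH}(X,Y)<\infty$ is not actually needed. If $d_{GH}(X,Y)=\infty$, then $d^{us}_{GH}(X,Y)=\infty$ by Proposition~\ref{prop:GHleCGH}, so the equality holds trivially.
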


\begin{proof}
It suffices to choose an arbitrary closed correspondence $R\in\bcR(X,Y)$ with $\dis R<\infty$ and show that for any $x\in X$, the set-valued map $R$ is upper semicontinuous at $x$.

Consider arbitrary open and closed balls $U:=U_r(x)$ and $A:=B_r(x)$, respectively. Since $\dis R<\infty$, the set $R(A)$ is bounded, so there exists an open ball $V:=U_s(y)\ss Y$ such that $R(A)\ss V$. Set $B=B_s(y)$. Since $X$ and $Y$ are boundedly compact, $A$ and $B$ are compact. The set-valued map $R|_A$ maps the metric compact $A$ into the metric compact $B$. Since $R$ is closed in $X\x Y$, $R|_A=(A\x B)\cap R$ is closed in $A\x B$. By Theorem~\ref{thm:CompHausd}, the set-valued map $R|_A$ is upper semicontinuous.

Let $W\ss Y$ be an arbitrary open set in $Y$ containing $R(x)$. Since $R(x)\ss R(A)\ss V$, we have $R(x)\ss V\cap W\ss B$, and $V\cap W$ is an open set in $Y$ and, therefore, also an open set in the topology induced on $B$. Since $R|_A$ is upper semicontinuous, there exists a neighborhood $W^x$ in $A$ such that for all $x'\in W^x$, $R(x')=R|_A(x')\ss V\cap W$. Note that the set $W^x\cap U$ is open in $X$, so it is an open neighborhood of $x$ in $X$ as well. Thus, we have constructed a neighborhood in $X$ of $x$ such that for all points $x'$ in it, the set $R(x')$ is contained in an arbitrarily chosen neighborhood in $Y$ of $R(x)$, which proves the upper semicontinuity of the set-valued map $R$ at $x$.
\end{proof}

\phantomsection
\renewcommand\bibname{References}


\addcontentsline{toc}{chapter}{\bibname}

\begin{thebibliography}{99}
\bibitem{Gromov1981} M.~Gromov, Structures m\'etriques pour les vari\'et\'es riemanniennes. Edited by Lafontaine and Pierre Pansu, 1981.

\bibitem{Gromov1999} M.~Gromov, Metric structures for Riemannian and non-Riemannian spaces. Birkh\"auser, 1999.

\bibitem{LimMemoliSmith} S.~Lim, F.~Memoli and Z.~Smith, \emph{The Gromov–Hausdorff distance between spheres}. Geometry \& Topology, 2023, v. 27, N 9, pp. 3733--3800.

\bibitem{LeeMorales}  J.~Lee, C.\,A.~Morales, Gromov-Hausdorff Stability of Dynamical Systems and Applications to PDEs. Birkh\"auser/Springer, 2022.

\bibitem{BurBurIva} D.\,Yu.~Burago, Yu.\,D.~Burago, S.\,V.~Ivanov, \emph{A Course in Metric Geometry}, Graduate Studies in Mathematics {\bf 33} A.M.S., Providence, RI, 2001.

\bibitem{TuzLect}  \href{https://arxiv.org/pdf/2012.00756}{A.\,A.~Tuzhilin, \emph{Lectures on Hausdorff and Gromov-Hausdorff Distance Geometry}, 2020, ArXiv e-prints, arXiv:2012.00756.}

\bibitem{BorGelMyshOb} Yu.\,G.~Borisovich, B.\,D.~Gel'man, A.\,D.~Myshkis, V.\,V.~Obukhovskii, \emph{Multivalued mappings}, Journal of Soviet Mathematics, 1984, vol. 24, N 6, pp. 719--791.

\bibitem{Mendelson} E.~Mendelson, Introduction to Mathematical Logic. Moscow: Nauka, 1984.

\bibitem{TBanach} \href{https://arxiv.org/pdf/2006.01613}{T.~Banach, \emph{Classical set theory: theory of sets and classes}, 2023, ArXiv e-prints, arXiv:2006.01613[math.LO].}


\end{thebibliography}
\end{document}